\documentclass[11pt]{article}

\overfullrule = 0pt
\usepackage{amsmath,amssymb,amsthm,amscd,epsfig}
\topmargin -0.5in
\textheight 9.0in

\textwidth 6.5in

\oddsidemargin 0.0in

\evensidemargin 0.0in

\newcommand{\vp}{\varepsilon}

\newcommand{\ul}{\underline}

\theoremstyle{plain}

\newtheorem{thm}{Theorem}

\newtheorem{lem}{Lemma}
\newtheorem{cor}{Corollary}

\theoremstyle{definition}

\newtheorem{defn}{Definition}

\theoremstyle{remark}

\begin{document}

\title{Thermodynamic formalism for invariant measures in iterated function systems with overlaps}
\author{Eugen Mihailescu}

\date{}
\maketitle

\large{Preprint of an article published in \ Communications in Contemporary Mathematics, 2021, DOI: 10.1142/S0219199721500413.}

\

\begin{abstract}
We study images of equilibrium (Gibbs) states for a class of non-invertible transformations associated to conformal iterated function systems with overlaps $\mathcal S$. We prove exact dimensionality for these image measures, and find a dimension formula  using their  overlap numbers. 

In particular, we obtain a geometric formula for the dimension of self-conformal measures for  iterated function systems with  overlaps, in terms of the overlap numbers.  
This implies a necessary and sufficient condition for dimension drop. If $\nu = \pi_*\mu$ is a self-conformal measure, then $HD(\nu) < \frac{h(\mu)}{|\chi(\mu)|}$ if and only if the overlap number $o(\mathcal S, \mu) > 1$.  Examples are also discussed.
\end{abstract}

\

\textbf{Mathematics Subject Classification 2010:} 37A35, 37D20, 37C45, 28A80.

\textbf{Keywords:} Thermodynamic formalism;  Hausdorff dimension; Lyapunov exponents; conformal iterated function systems with overlaps; hyperbolic endomorphisms; overlap numbers for measures.

\section{Introduction and main results.}

In this paper we study the thermodynamic formalism and dimension for images of equilibrium measures, for noninvertible transformations associated to conformal iterated function systems with overlaps $\mathcal S$. We prove  \textbf{exact dimensionality} for this new class of image measures; this implies that all the dimensions of these measures (Hausdorff, pointwise, box) coincide. A \textbf{formula for the dimension} of these measures is obtained, in terms of their entropy, Lyapunov exponent and  \textbf{overlap number}  (which represents the average rate of growth of the number of generic self-intersections in the  limit set $\Lambda$ of $\mathcal S$).

In particular, we prove the exact dimensionality of  arbitrary \textbf{self-conformal measures} for conformal iterated function systems with overlaps, and we determine a  \textbf{dimension formula} for self-conformal measures in terms of overlap numbers, entropy and Lyapunov exponents.

When the iterated function system (IFS) satisfies Open Set Condition, the dimension of the self-conformal measure $\nu:= \pi_*\mu$ is equal to the value ``entropy divided by Lyapunov exponent'' (see \cite{F1}). 
For \textbf{ IFS with overlaps} we show that for a  self-conformal measure $\nu_{\bf p} = \pi_*\mu_{\bf p}$, \  $HD(\nu_{\bf p}) = \frac{h(\mu_{\bf p})}{|\chi(\mu_{\bf p})|}$ if and only if the overlap number of $\mu_{\bf p}$ is equal to 1, and we say in that case that the system $\mathcal S$ is \textbf{separated $\mu_{\bf p}$-asymptotically}. If $\nu_0$ is the equally distributed self-conformal measure, then $HD(\nu_0) = \frac{h(\mu_0)}{|\chi(\mu_0)|}$ if and only if the topological overlap number of $\mathcal S$ is equal to 1.

Thus for self-conformal measures, our results establish a necessary and sufficient condition for the   \textbf{dimension drop} of $\pi_*\mu$ from the value $\frac{h(\mu)}{|\chi(\mu)|}$, namely that the overlap number $o(\mathcal S, \mu) >1$.

The exact dimensionality of self-conformal measures (and other measures) on limit sets of finite conformal IFS with overlaps was proved by Feng and Hu in the groundbreaking paper \cite{FH}, and a dimension formula was obtained by them in terms of  the  entropy,  Lyapunov exponent and projection entropy. 
Our proof of exact dimensionality is different, and it follows from our general result for hyperbolic endomorphisms. The dimension formula that we obtain is partially geometric, and we apply it  to certain examples.

\

  In the sequel, we study the analytic and stochastic properties for equilibrium measures over  lift spaces associated to  conformal iterated systems with overlaps, and for a class of their push-forward measures. In the process, we investigate also the intricate interlacing  in a typical trajectory in $\Lambda$ of  generic iterates and the gaps of  non-generic iterates, and how they influence the local densities of the  measures. 
The general setting  is the following: 

Let $\mathcal S = \{\phi_i, i \in I\}$ be an arbitrary finite iterated function system of smooth conformal injective contractions of a compact set with nonempty interior $V \subset \mathbb R^D, D \ge 1$. We do not assume any kind of separation condition for $\mathcal S$ \ (see for eg  \cite{F}, \cite{Hu}, \cite{LNW} for some possible separation conditions). The \textit{limit set} of the system $\mathcal S$ is given by: $$\Lambda = \mathop{\cup}_{\omega \in \Sigma_I^+}\mathop{\cap}\limits_{n \ge 0} \phi_{\omega_1\ldots \omega_n}(V),$$
where $\Sigma_I^+$ is the 1-sided symbolic space on $|I|$ symbols, and $\omega = (\omega_1, \ldots, \omega_n, \ldots) \in \Sigma_I^+$ is arbitrary (see for eg \cite{F}, \cite{Hu}).  
 Denote by $[\omega_1\ldots\omega_n]$ the cylinder on the first $n$ elements of $\omega$, and by $\phi_{i_1\ldots i_p}:= \phi_{i_1}\circ \ldots \circ \phi_{i_p}$. The shift $\sigma: \Sigma_I^+ \to \Sigma_I^+$ is given by $\sigma(\omega) = (\omega_2, \omega_3, \ldots),  \omega \in \Sigma_I^+$. We endow the space $\Sigma_I^+ \times \Lambda$ with the product metric. Also let the canonical coding map, $$\pi: \Sigma_I^+ \to \Lambda, \ \pi(\omega) := \phi_{\omega_1\omega_2\ldots}(V)$$ 
 
Now consider the  non-invertible skew product transformation  on the metric space $\Sigma_I^+\times \Lambda$, 
$$
\Phi: \Sigma_I^+ \times \Lambda \to \Sigma_I^+ \times \Lambda, \  \ \Phi(\omega, x) = (\sigma \omega, \phi_{\omega_1}(x)), \ (\omega, x) \in \Sigma_I^+ \times \Lambda$$
The endomorphism $\Phi$ has  a type of hyperbolic structure, since it is expanding in the first coordinate and contracting in the second coordinate (due to the uniform contractions in $\mathcal S$).
The \textit{pressure functional} of $\sigma$ is defined for general continuous potentials $g$ on $\Sigma_I^+$ as $P_\sigma: \mathcal C(\Sigma_I^+) \to \mathbb R$ (for eg \cite{Bo}, \cite{KH}, \cite{Ru-78}).
Consider now a H\"older continuous potential $\psi: \Sigma_I^+ \to \mathbb R$, and let the functional $F_\psi$ defined on the space $\mathcal M(\sigma)$ of $\sigma$-invariant probability measures on $\Sigma_I^+$  by $$F_{\psi}: \mathcal M(\sigma) \to \mathbb R, \ F_\psi(\mu) := h_\sigma(\mu) + \int_{\Sigma_I^+} \psi \ d\mu,$$
where $h_\sigma(\mu)$ is the measure-theoretic entropy of $\mu$.
Then the supremum of $F_\psi$ is equal to the pressure $P_\sigma(\psi)$ of $\psi$, and is attained at a unique measure, called the \textit{equilibrium measure} of $\psi$ and denoted by $\mu_\psi$.  
Since $\psi$ was assumed  to be H\"older continuous, the notion of equilibrium measure is equivalent to that of Gibbs measure (for eg \cite{Bo}, \cite{KH}, \cite{Ru-78}). 

Let $\pi_1: \Sigma_I^+\times \Lambda \to \Sigma_I^+$ be the projection on the first coordinate.
Define also the potential $\hat\psi:= \psi \circ \pi_1: \Sigma_I^+ \times \Lambda \to \mathbb R$, which is H\"older continuous. Let the functional $F_{{\hat\psi}}$ defined on the space  $\mathcal M(\Phi)$ of $\Phi$-invariant probability measures on $\Sigma_I^+ \times \Lambda$, be given by $$F_{\hat\psi}: \mathcal M(\Phi) \to \mathbb R, \ F_{\hat\psi}(\mu) = h_\Phi(\mu) + \int_{\Sigma_I^+\times \Lambda} \hat\psi \ d\mu,$$ where $h_\Phi(\mu)$ is the measure-theoretic entropy of $\mu \in \mathcal M(\Phi)$ with respect to $\Phi$. Then as $\Phi$ has a hyperbolic structure, it can be shown similarly as in \cite{KH} that $F_{\hat\psi}$ attains its supremum $P_\Phi(\hat\psi)$ at a unique measure on $\Sigma_I^+ \times \Lambda$, called the \textit{equilibrium measure} of $\hat\psi$, denoted by $\mu_{\hat\psi}$ or by $\hat\mu_\psi$. In this case, $\hat\mu_\psi$ is  a Gibbs measure for $\hat \psi$ with respect to $\Phi$ on $\Sigma_I^+\times \Lambda$ (\cite{KH}, \cite{Ru-78}). 
Notice that, $\pi_{1*}\hat\mu_{\psi} = \mu_\psi$.
If $\psi$ is fixed, denote also  $\mu_\psi$ by $\mu^+$ and $\hat\mu_{\psi}$ by $\hat\mu$. 
 The projection to the second coordinate is: 
$$\pi_2: \Sigma_I^+\times \Lambda \to \Lambda, \ \pi_2(\omega, x) = x$$ 

\textbf{ The main focus} of this paper are the metric properties of the  measures $\hat\mu_\psi$ and  $\pi_{2*}\hat\mu_{\psi}$.
Let us denote by, 
\begin{equation}\label{mu1}
\nu_{1, \psi}:= (\pi\circ \pi_1)_*\hat\mu_\psi, \ \ \text{and} \ \ 
\nu_{2, \psi}:= \pi_{2*}\hat\mu_\psi
\end{equation}

Since for any $n \ge 1$, the map $\Phi^n(\omega, x) = (\sigma^n\omega, \phi_{\omega_n\ldots \omega_1}(x))$ reverses the order of $\omega_1, \ldots, \omega_n$ in its second coordinate and since $\hat\mu_\psi$ is $\Phi^n$-invariant, we call 
\begin{equation}\label{push}
\nu_{2, \psi}=\pi_{2, *}\hat\mu_\psi,
\end{equation}
 an \textbf{order-reversing projection measure}.  In general the measure $\nu_{1, \psi}$ is different from $\nu_{2, \psi}$.

Some important notions in Dimension Theory are those of lower/upper pointwise dimensions of a measure, and the notion of exact dimensional measures (see \cite{Pe}). In general, for a probability Borel measure $\mu$ on a metric space $X$,  the \textit{lower pointwise dimension} of $\mu$ at $x\in X$ is: $$\underline\delta(\mu)(x):=\mathop{\liminf}\limits_{r \to 0} \frac{\log\mu(B(x, r))}{\log r},$$  and the \textit{upper pointwise dimension} of $\mu$ at $x\in X$ is defined as: $$\overline{\delta}(\mu)(x):= \mathop{\limsup}\limits_{r\to 0} \frac{\log\mu(B(x, r))}{\log r}$$ If $\underline\delta(\mu)(x) = \overline\delta(\mu)(x)$ then we call the common value the \textit{pointwise dimension} of $\mu$ at $x$, denoted by $\delta(\mu)(x)$. If for $\mu$-a.e $x \in X$, the pointwise dimension $\delta(\mu)(x)$ exists and is constant, we say that $\mu$ is \textit{exact dimensional}. In this case there is a value $\alpha\in \mathbb R$ s.t for $\mu$-a.e $x\in X$, $$\delta(\mu)(x):=\underline\delta(\mu)(x) = \overline\delta(\mu)(x) = \alpha$$ 

In \cite{FH}, Feng and Hu defined the projection entropy for a $\sigma$-invariant probability measure $\mu$ on $\Sigma_I^+$, namely
$h_\pi(\sigma, \mu) := H_\mu(\mathcal P|\sigma^{-1}\pi^{-1}\gamma) - H_\mu(\mathcal P|\pi^{-1}\gamma),$ 
where $\pi:\Sigma_m^+\to \Lambda$ is the canonical coding map, $\mathcal P$ is the partition with 0-cylinders $\{[i], i \in I\}$ of $\Sigma_I^+$, and $\gamma$ is the $\sigma$-algebra of Borel sets in $\mathbb R^d$. It was shown in \cite{FH} that if $\mu$ is ergodic then for $\mu$-a.e $\omega \in \Sigma_I^+$,  
$$
\delta(\pi_*\mu)(\pi\omega) = \frac{h_\pi(\sigma, \mu)}{-\int_{\Sigma_I^+}\log |\phi_{\omega_1}'(\pi\sigma\omega)| \ d\mu(\omega)},
$$
hence $\pi_*\mu$ is exact dimensional. This is equivalent, in our notation, to the fact that $\nu_1$ is exact dimensional. Our approach and methods in the sequel are however different, as we study the order-reversing image measure $\pi_{2*}\mu$ and the measure $\hat\mu$.
 
 Denote the \textit{stable Lyapunov exponent} of  $\Phi$ with respect to the measure $\hat\mu$ on $\Sigma_I^+\times \Lambda$ by, 
 \begin{equation}\label{sle}
 \chi_s(\hat\mu):= \int_{\Sigma_I^+\times\Lambda}\log|\phi'_{\omega_1}(x)| \ d\hat\mu(\omega, x)
 \end{equation}
For a shift-invariant measure $\mu$ on $\Sigma_I^+$, denote  the \textit{Lyapunov exponent} of $\mu$ with respect to $\mathcal S$ by,
\begin{equation}\label{Lex}
\chi(\mu):= \int_{\Sigma_I^+}\log|\phi'_{\omega_1}(\pi\sigma\omega)| \ d\mu(\omega)
\end{equation}

We will use the Jacobian in the sense of Parry \cite{Pa}; consider the Jacobian $J_\Phi(\hat\mu)$ of a $\Phi$-invariant measure $\hat \mu$ on $\Sigma_I^+ \times \Lambda$. Then $J_\Phi(\hat \mu) \ge 1$ for $\hat\mu$-a.e $(\omega, x) \in \Sigma_I^+\times \Lambda$, and for $\hat\mu$-a.e $(\omega, x) \in \Sigma_I^+ \times \Lambda$, $$J_\Phi(\hat\mu)(\omega, x) = \mathop{\lim}\limits_{r\to 0} \frac{\hat\mu(\Phi(B((\omega, x), r)))}{\hat\mu(B((\omega, x), r))}$$
From the Chain Rule for Jacobians,  
$J_{\Phi^n}(\hat\mu)(\omega, x) = J_\Phi(\hat \mu)(\Phi^{n-1}(\omega, x))\cdot \ldots \cdot J_\Phi(\hat\mu)(\omega, x)$ for  $n \ge 1$,
and from Birkhoff Ergodic Theorem applied to $\log J_\Phi(\hat\mu)(\cdot, \cdot)$, we have that for $\hat\mu$-a.e $(\omega, x) \in \Sigma_I^+\times \Lambda$,
\begin{equation}\label{BET}
\frac{\log J_{\Phi^n}(\hat \mu)(\omega, x)}{n} \mathop{\longrightarrow}\limits_{n \to \infty} \int_{\Sigma_I^+\times \Lambda} \log J_{\Phi}(\hat\mu)(\eta, y) \ d\hat\mu(\eta, y)
\end{equation}
Ruelle introduced in \cite{Ru-fold}, \cite{Ru-survey}, the notion of \textit{folding entropy} $F_f(\nu)$ of a measure $\nu$ invariant with respect to an endomorphism $f: X \to X$ on a Lebesgue space $X$, as being the conditional entropy $H_\nu(\epsilon|f^{-1}\epsilon)$, where $\epsilon$ is the point partition of $X$ and $f^{-1}\epsilon$ is the fiber partition. In fact from \cite{Pa}, \cite{Ru-fold},  
$F_f(\nu) = \int_X \log J_f(\nu) d\nu$.
Thus for the $\Phi$-invariant measure $\hat \mu$ on $\Sigma_I^+\times \Lambda$, 
\begin{equation}\label{FlogJ}
F_\Phi(\hat \mu)  = \int_{\Sigma_I^+\times \Lambda} \log J_\Phi(\hat \mu) \ d\hat \mu
\end{equation}

In our case, the folding entropy turns out to be related to the overlap number of $\hat\mu$. 
The notion of \textit{overlap number} $o(\mathcal S, \mu_g)$ for an equilibrium measure $\mu_g$ of a H\"older continuous potential $g:\Sigma_I^+ \times \Lambda \to \mathbb R$ was introduced  in \cite{MU-JSP2016}, and represents an average asymptotic rate of growth for the number of generic overlaps of order $n$ in $\Lambda$. Namely, for any $\tau>0$, let the set of generic preimages with respect to $\mu_g$ having the same $n$-iterates as $(\omega, x)$, 
$$\Delta_n((\omega, x), \tau, \mu_g) := \{(\eta_1, \ldots, \eta_n) \in I^n, \exists y \in \Lambda, \phi_{\omega_n\ldots \omega_1}(x) = \phi_{\eta_n\ldots \eta_1}(y),  \ |\frac{S_ng(\eta, y)}{n} - \int_{\Sigma_I^+ \times \Lambda} g\ d\mu_\psi| < \tau\},$$
where $(\omega, x) \in \Sigma_I^+ \times \Lambda$ and $S_n g(\eta, y)$ is the consecutive sum of $g$ with respect to $\Phi$. Denote  by $$b_n((\omega, x), \tau, \mu_g):= Card \Delta_n((\omega, x), \tau, \mu_g)$$
Then, in \cite{MU-JSP2016} we showed that the following limit exists and defines the \textit{overlap number} of $\mu_g$,
$$o(\mathcal S, \mu_g) = \exp\big(\mathop{\lim}\limits_{\tau \to 0} \mathop{\lim}\limits_{n \to \infty} \frac 1n \int_{\Sigma_I^+ \times \Lambda} \log b_n((\omega, x), \tau, \mu_g) \ d\mu_g(\omega, x) \big)$$ 
Clearly $o(\mathcal S, \mu_g) \ge 1$.
There is also a relation between overlap number and folding entropy, 
\begin{equation}\label{oS}
o(\mathcal S, \mu_g) = \exp(F_\Phi(\mu_g))
\end{equation} 

If $\mu_g = \hat\mu_0$ is the measure of maximal entropy of $\Phi$ on $\Sigma_I^+\times \Lambda$, denote $o(\mathcal S, \hat\mu_0)$ by $o(\mathcal S)$ and call it \textit{the topological overlap number} of $\mathcal S$. All preimages are generic in this case. For $n \ge 1$, $x \in \Lambda$, let
 \begin{equation}\label{beta}
 \beta_n(x) := Card\{(j_1, \ldots, j_n) \in  I^n, \ x \in \phi_{j_1}\circ\ldots \circ \phi_{j_n}(\Lambda)\}
 \end{equation}
If $\mu_0$ denotes the measure of maximal entropy on $\Sigma_I^+$, then the topological overlap number satisfies:
\begin{equation}\label{topo}
o(\mathcal S) = \exp\big(\mathop{\lim}\limits_{n \to \infty} \frac 1n \int_{\Sigma_I^+} \log \beta_n(\pi\omega) \ d\mu_0(\omega) \big),
\end{equation}
hence $o(\mathcal S)$ is an average rate of growth of the number of intersections between sets of type $\phi_{i_1\ldots i_n}(\Lambda)$,  $i_1, \ldots, i_n \in I$ and $n \to \infty$.

 \

 Dynamics and dimension for  dynamical systems with some form of hyperbolicity attracted a lot of interest and were studied for eg in \cite{BPS}, \cite{Bo}, \cite{ER}, \cite{LY}, \cite{Ma},  \cite{Pe}, \cite{PW}, \cite{Ru-78}, \cite{Ru-survey}, \cite{Y}, to mention a few.
Also endomorphisms (non-invertible maps) were studied for example in \cite{Mane}, \cite{Ma1} -  \cite{MU-BLMS}, \cite{Pa},  \cite{PW}, \cite{Ru-fold} -  \cite{ST}. 
The problem of dimension in conformal iterated function systems with or without overlaps was studied in  \cite{BF}, \cite{DN}, \cite{FJ} - \cite{Hu}, \cite{Ke} - \cite{LNW}, \cite{MU-JSP2016},  \cite{MU-Adv},  \cite{O}, \cite{PeS},  \cite{PS},  \cite{Sh} - \cite{ShSo} to mention a few. Exact dimensionality and dimension formulas for invariant measures were also intensely studied over the years.
In \cite{Ma} Manning showed that for an Axiom A diffeomorphism of a surface
preserving an ergodic measure $\mu$, the entropy $h(\mu)$ is equal to the product of the positive
Lyapunov exponent of $\mu$ and the dimension of the set of $\mu$-generic points in an unstable
manifold. 
 In  \cite{Y} Young proved that the Hausdorff dimension of a hyperbolic invariant measure $\mu$ for a surface diffeomorphism is given by the entropy and the Lyapunov exponents,  $HD(\mu) = h(\mu)(\frac 1{\chi_u(\mu)} - \frac 1{\chi_s(\mu)})$. In \cite{LY} Ledrappier and Young proved a formula for the entropy of an invariant
measure $\mu$ for a diffeomorphism of a compact Riemannian manifold, in terms of Lyapunov
exponents and dimensions of $\mu$ in the respective stable/unstable directions. In \cite{Ma1} Manning studied  the dimension for the maximal measure of a polynomial map. And in \cite{Mane} Ma\~ne proved exact dimensionality for ergodic measures invariant to rational maps.   In \cite{PW} Pesin and Weiss verified the
Eckmann-Ruelle Conjecture (\cite{ER}) for equilibrium measures for H\"older continuous conformal
expanding maps and conformal Axiom A (topologically hyperbolic) homeomorphims; and constructed an Axiom A homeomorphism
for which the  measure of maximal entropy
 has different upper and lower pointwise dimensions a.e, so in this case the Eckmann-Ruelle Conjecture is false.
Then, in \cite{BPS} Barreira, Pesin and Schmeling showed that every hyperbolic measure $\mu$ invariant under a $C^{1+\vp}$  diffeomorphism
of a smooth Riemannian manifold has asymptotically almost local
product structure and  proved the Eckmann-Ruelle conjecture, namely the pointwise
dimension of $\mu$ exists almost everywhere, thus $\mu$ is exact dimensional. In \cite{PeS} Peres and Solomyak showed the existence of $L^q$-dimensions and entropy dimension for  self-conformal measures. Feng and Hu proved in \cite{FH} that the canonical projection of any ergodic measure from the shift space for a finite conformal iterated function system with overlaps, is exact dimensional on the limit set,  and found the Hausdorff dimension of this projection measure by using a notion of projection entropy. In \cite{FJ} Falconer and Jin proved  that the random multiplicative cascade measures  on self-similar sets and their projections and sections are almost surely exact dimensional.  
For a class of hyperbolic endomorphisms it was shown in \cite{M-ETDS11} that the conditional measures of equilibrium measures on the stable manifolds are geometric, and thus exact dimensional. 
For random countable iterated function systems with arbitrary overlaps, Mihailescu and Urba\'nski showed in \cite{MU-Adv} that the projection of any ergodic measure from the shift space which satisfies a finite entropy condition, is exact dimensional, and found a formula for its dimension and gave applications. 
In \cite{BK} Barany and K\"aenm\"aki studied some self-affine measures.

Our current result is different in the sense that it treats general conformal iterated function systems with overlaps and a class of invariant measures, including self-conformal measures,  by relating the dimension of  measures with their overlap numbers. Our formula has a geometric character, and the proof uses different methods, coming from dynamics of endomorphisms. Also, we obtain a necessary and sufficient condition for dimension drop from the value ``entropy divided by Lyapunov exponent''. Related to the problem of dimension drop Hochman \cite{Ho} showed for self-similar measures on $\mathbb R$ that if the dimension is strictly smaller than the similarity dimension and 1, then there is a super-exponential concentration of cylinders. Our results show that  an arbitrary self-conformal measure $\nu$ in $\mathbb R^D, D \ge 1$ has a dimension drop if and only if, the overlap number of that measure is strictly larger than 1. We introduce also the notion of $\mu_{\bf p}$-asymptotically separated systems. 
The formula we obtain can be used for dimension estimates in  non-linear examples, including for instance mixed Julia sets. 


\

Our \textbf{main results}  are the following:
\newline
First, in \textbf{Theorem \ref{thm1}}  we prove the exact dimensionality and dimension formula for the general push-forward measure $\nu_{2, \psi}$.

\begin{thm}\label{thm1}
Let $\mathcal S$ be a finite conformal iterated function system on a compact set with non-empty interior $V \subset \mathbb R^D, D \ge 1$, with limit set $\Lambda$, and $\psi$ be a H\"older continuous potential on $\Sigma_I^+$ with equilibrium measure $\mu_\psi$, and let $\hat \mu_\psi$ be the equilibrium measure of $\psi \circ \pi_1$ on $\Sigma_I^+\times \Lambda$ with respect to $\Phi$. Denote   $\nu_{2, \psi}:= \pi_{2*}\hat\mu_\psi$. Then the measure $\nu_{2, \psi}$ is exact dimensional on $\Lambda$, and for $\nu_{2, \psi}$-a.e. $x\in \Lambda$,
$$HD(\nu_{2, \psi}) = \delta(\nu_{2, \psi})(x) =  \frac{ h_\sigma(\mu_\psi) - \log(o(\mathcal S, \hat \mu_\psi))}{|\chi_s(\hat\mu_\psi)|}.$$

\end{thm}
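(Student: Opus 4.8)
The plan is to compute the pointwise dimension of $\nu_{2,\psi}=\pi_{2*}\hat\mu_\psi$ at a typical point $x\in\Lambda$ by relating balls $B(x,r)\subset\Lambda$ back to the dynamics of $\Phi$ on the lift space $\Sigma_I^+\times\Lambda$. Since $\Phi$ contracts in the second coordinate with a rate governed by $\chi_s(\hat\mu_\psi)$, the preimage under $\pi_2$ of a small ball $B(x,r)$ should be comparable to a union of ``dynamical rectangles'' of the form $C\times\phi_{\omega_n\cdots\omega_1}(V)$, where $n=n(r)$ is chosen so that $\mathrm{diam}(\phi_{\omega_n\cdots\omega_1}(V))\approx r$, i.e. $n\approx \frac{\log r}{\chi_s(\hat\mu_\psi)}$ by the Birkhoff theorem applied to $\log|\phi'_{\omega_1}|$. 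The first step is therefore to make this comparison precise using bounded distortion for the conformal maps $\phi_i$, obtaining $\hat\mu_\psi\big(\pi_2^{-1}B(x,r)\big)\asymp \hat\mu_\psi$ of a controlled union of such rectangles over the fiber.

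\smallskip

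The second, and central, step is to count and weigh these rectangles. Fix a generic $(\omega,x)$; the contributions to $\nu_{2,\psi}(B(x,r))$ come from all words $(\eta_1,\dots,\eta_n)$ with $\phi_{\eta_n\cdots\eta_1}(y)$ close to $x$ for some $y\in\Lambda$ — exactly the set $\Delta_n((\omega,x),\tau,\hat\mu_\psi)$ from the definition of the overlap number, up to the genericity restriction which costs nothing $\hat\mu_\psi$-a.e. by the ergodic theorem applied to $\hat\psi$ (equivalently $S_n\hat\psi/n\to\int\hat\psi\,d\hat\mu_\psi$). Each such rectangle carries $\hat\mu_\psi$-measure $\asymp e^{-n h_\sigma(\mu_\psi)}$ up to subexponential factors, because the Gibbs property of $\hat\mu_\psi$ for $\hat\psi=\psi\circ\pi_1$ forces the conditional measure on cylinders $[\eta_1\dots\eta_n]\times\Lambda$ to behave like $e^{S_n\hat\psi-nP_\Phi(\hat\psi)}=e^{S_n\psi-nP_\sigma(\psi)}$, and for generic words $S_n\psi/n\to\int\psi\,d\mu_\psi$, so $e^{S_n\psi-nP_\sigma(\psi)}\asymp e^{-n(P_\sigma(\psi)-\int\psi\,d\mu_\psi)}=e^{-n h_\sigma(\mu_\psi)}$. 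Multiplying by the number of rectangles $b_n\asymp o(\mathcal S,\hat\mu_\psi)^n=e^{nF_\Phi(\hat\mu_\psi)}$ via \eqref{oS}, we get
$$
\nu_{2,\psi}(B(x,r))\asymp e^{-n(h_\sigma(\mu_\psi)-\log o(\mathcal S,\hat\mu_\psi))}.
$$
Taking $\log$, dividing by $\log r\approx n\,\chi_s(\hat\mu_\psi)=-n|\chi_s(\hat\mu_\psi)|$, and sending $r\to0$ (so $n\to\infty$) yields the stated value for $\delta(\nu_{2,\psi})(x)$; since this holds $\nu_{2,\psi}$-a.e. and the value is constant, exact dimensionality follows, and the equality with $HD(\nu_{2,\psi})$ is the standard consequence (e.g. the mass distribution principle plus the a.e. pointwise dimension bound, as in \cite{Pe}).

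\smallskip

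The main obstacle is the rigorous control of the overlap multiplicity at the geometric scale $r$ rather than the symbolic scale $n$: a priori the overlap number $o(\mathcal S,\hat\mu_\psi)$ is defined via a double limit ($n\to\infty$ then $\tau\to0$) of the $\log$-\emph{averages} of $b_n$, whereas the pointwise dimension requires an $\hat\mu_\psi$-a.e. statement about $b_n((\omega,x),\tau,\hat\mu_\psi)$ itself along the subsequence $n=n(r)$. Upgrading the $L^1$/average statement to an almost-everywhere-subexponential statement is where the real work lies; I expect this to be handled by a Borel--Cantelli / maximal-inequality argument combined with the quasi-multiplicativity of $b_n$ (so that $\frac1n\log b_n$ converges a.e., not merely in mean) — in the spirit of the subadditive ergodic theorem — together with the interlacing analysis of generic and non-generic iterates alluded to in the introduction, which guarantees that the non-generic words do not inflate the count. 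A secondary technical point is the two-sided comparison between $\pi_2^{-1}B(x,r)$ and the union of rectangles: the lower bound (enough mass) is immediate, but the upper bound requires that rectangles of generation $n$ whose images meet $B(x,r)$ but whose symbolic data is atypical contribute negligibly, which again follows from the Gibbs estimate and a covering argument. Once these two points are secured, the dimension formula drops out by the scale-matching computation above.
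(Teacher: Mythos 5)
Your heuristic arrives at the right formula, and the scale-matching computation at the end is sound, but the two points you yourself defer -- the almost-everywhere control of the overlap multiplicity and the upper bound on $\nu_{2,\psi}(B(x,r))$ -- are precisely the content of the theorem, and the routes you propose for them do not close. The paper never upgrades the $L^1$-average defining $o(\mathcal S,\hat\mu_\psi)$ to an a.e.\ statement about $b_n$ (your proposed Borel--Cantelli/quasi-multiplicativity argument is speculative: $\log b_n(\cdot,\tau,\cdot)$ is not obviously sub- or super-additive, and the genericity parameter $\tau$ obstructs any clean multiplicativity). Instead it replaces the count $b_n$ by the Jacobian cocycle $\log J_{\Phi^n}(\hat\mu_\psi)$, which \emph{is} an additive Birkhoff sum, so $\frac1n\log J_{\Phi^n}(\hat\mu_\psi)\to F_\Phi(\hat\mu_\psi)$ holds $\hat\mu_\psi$-a.e.\ with no extra work; the overlap number enters only at the very last line through the identity $F_\Phi(\hat\mu_\psi)=\log o(\mathcal S,\hat\mu_\psi)$ of (\ref{oS}), imported from \cite{MU-JSP2016}. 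This substitution is the key idea of the proof and it is absent from your proposal; without it (or a genuine proof of a.e.\ subexponential behaviour of $b_n$ along $n=n(r)$), your central display $\nu_{2,\psi}(B(x,r))\asymp e^{-n(h_\sigma(\mu_\psi)-\log o(\mathcal S,\hat\mu_\psi))}$ is an unproved assertion.

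The second gap is in the upper bound for $\nu_{2,\psi}(B(x,r))$ (i.e.\ the lower bound for the dimension). Your "covering argument'' must handle two separate phenomena. First, a ball $B(x,r)$ meets many image sets $\phi_{\eta_n\cdots\eta_1}(\Lambda)$ besides the ones containing $x$, and their number need not equal $b_n(x)$; the paper disposes of this not by counting but by the Borel Density Lemma applied to the positive-measure sets $\phi_{i_m\cdots i_1}\pi_2(\tilde A(m,\vp))$ (after a disjointification), which shows that a single good image set already carries at least half the mass of $B(x,r)$. Second, your remark that genericity "costs nothing a.e.'' conflates genericity of the base point $(\omega,x)$ with genericity of the \emph{other} preimage words $(\eta,y)$ contributing to the ball; the ergodic theorem at $(\omega,x)$ says nothing about those. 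The paper's treatment of this is the delicate part: along the backward $\Phi^m$-orbit of $x$ it interlaces blocks of generic times (where the Jacobian estimate applies) with gaps of non-generic times of total density at most $3\alpha$ (where only the crude bound $d^{mk'}$ on the number of preimage cylinders is available), and then lets $\alpha\to0$. Your proposal gestures at this interlacing but supplies neither the gap-length bound nor the estimate used inside the gaps, so as written the argument establishes only the easy inequality $\overline{\delta}(\nu_{2,\psi})\le\frac{h_\sigma(\mu_\psi)-\log o(\mathcal S,\hat\mu_\psi)}{|\chi_s(\hat\mu_\psi)|}$ and not the matching lower bound needed for exact dimensionality.
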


\

The proof of this Theorem  has several parts. The proof for the lower bound for dimension is  the most difficult of these parts and contains some new methods from dynamics of endomorphisms. It is based on an intricate study of the interlacing  of generic iterates with respect to $\hat \mu_\psi$ and of the maximal lengths of ``gaps'' consisting of  non-generic iterates in trajectories, and how these are  involved in computing local densities of $\nu_{2, \psi}$. We apply Borel Density Lemma on leaves of type $\phi_{i_1\ldots i_m}\Lambda$ to get measure estimates.

\

Then, we obtain applications of Theorem \ref{thm1} to dimension formulas  for several \textbf{cases}:

\ \ \textbf{1.} An important particular case is that of \textbf{self-conformal measures} for arbitrary \textbf{conformal iterated function systems with overlaps} in $\mathbb R^D, D \ge 1$. 

 Let the IFS $\mathcal S$ as above, and a probability vector $\textbf p = (p_1, \ldots, p_{|I|})$, and $\mu_{\textbf p}$ be the associated Bernoulli measure on $\Sigma_I^+$. Any Bernoulli measure $\mu_{\textbf p}$ on $\Sigma_I^+$ is the equilibrium measure of some H\"older continuous potential $\psi_{\textbf p}$. 
 Then denote by $\hat\mu_{\textbf p}$ the lift of $\mu_{\textbf p}$ to $\Sigma_I^+\times \Lambda$, which is obtained as the equilibrium measure of $\psi_{\textbf p} \circ \pi_1$.  And denote by $\nu_{1, \textbf p}, \nu_{2, \textbf p}$ the associated projected measures $\nu_1, \nu_2$. 
In this case we showed in \cite{MU-JSP2016}  that $$\nu_{1, \textbf p} = \nu_{2, \textbf p},$$ so $\delta(\nu_{1, \textbf p}) = \delta(\nu_{2, \textbf p}).$  Recall the definition of Lyapunov exponent $\chi(\mu_{\bf p})$  from (\ref{Lex}). Let us denote also the \textit{overlap number of $\mu_{\bf p}$} by, 
\begin{equation}\label{ovsc}
o(\mathcal S, \mu_{\bf p}) := o(\mathcal S, \hat \mu_{\bf p})
\end{equation}

Then the \textbf{dimension of an arbitrary self-conformal measure} $ \nu_{\bf p}$   is given by:

\begin{thm}\label{bernou}
Let $\mathcal S = \{\phi_i, 1 \le i \le m\}$ be a system of injective conformal contractions on a  compact  set with non-empty interior $V \subset \mathbb R^D, D \ge 1$, with limit set $\Lambda$, and
consider an arbitrary probability vector  $\bf p = (p_1, \ldots, p_m)$. Let also $\mu_{\bf p}$  be the Bernoulli measure on $\Sigma_m^+$ associated to $\bf p$, and $\nu_{\bf p}= \pi_*\mu_{\bf p}$ be its canonical projection  on $\Lambda$. 
Then,

$$HD(\nu_{\bf p}) =  \frac{ -\mathop{\sum}\limits_{1\le i \le m} p_i\log p_i - \log(o(\mathcal S, \mu_{\bf p}))}{|\chi(\mu_{\bf p})|}.$$
  
\end{thm}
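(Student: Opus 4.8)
The plan is to derive Theorem \ref{bernou} as a special case of Theorem \ref{thm1}. First I would observe that any Bernoulli measure $\mu_{\bf p}$ on $\Sigma_m^+$ is the equilibrium measure of the locally constant (hence H\"older) potential $\psi_{\bf p}(\omega) := \log p_{\omega_1}$, so Theorem \ref{thm1} applies with $\psi = \psi_{\bf p}$. Next I would identify the three quantities appearing in the formula of Theorem \ref{thm1} in this Bernoulli setting. The entropy is the standard computation $h_\sigma(\mu_{\bf p}) = -\sum_{i=1}^m p_i \log p_i$. The overlap number is $o(\mathcal S, \hat\mu_{\bf p})$, which by definition (\ref{ovsc}) is precisely what we denote $o(\mathcal S, \mu_{\bf p})$. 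The only point requiring a short argument is the identification of the stable Lyapunov exponent $\chi_s(\hat\mu_{\bf p})$ from (\ref{sle}) with the Lyapunov exponent $\chi(\mu_{\bf p})$ from (\ref{Lex}).

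For that identification I would argue as follows. By Theorem \ref{thm1}, $\nu_{2,\bf p} = \pi_{2*}\hat\mu_{\bf p}$ is exact dimensional; but the statement from \cite{MU-JSP2016} quoted in the excerpt, $\nu_{1,\bf p} = \nu_{2,\bf p}$, identifies this with $\nu_{1,\bf p} = (\pi\circ\pi_1)_*\hat\mu_{\bf p} = \pi_*\mu_{\bf p} = \nu_{\bf p}$, since $\pi_{1*}\hat\mu_{\bf p} = \mu_{\bf p}$. Hence $HD(\nu_{\bf p}) = HD(\nu_{2,\bf p})$ and the left-hand sides match. For the denominators, I would compare the integrands: in $\chi_s(\hat\mu_{\bf p})$ we integrate $\log|\phi'_{\omega_1}(x)|$ against $\hat\mu_{\bf p}(\omega,x)$ on $\Sigma_m^+\times\Lambda$, while in $\chi(\mu_{\bf p})$ we integrate $\log|\phi'_{\omega_1}(\pi\sigma\omega)|$ against $\mu_{\bf p}(\omega)$ on $\Sigma_m^+$. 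The key is that $\hat\mu_{\bf p}$, being the $\Phi$-equilibrium (Gibbs) state of $\psi_{\bf p}\circ\pi_1$, has conditional measures on the fibers $\{\omega\}\times\Lambda$ supported on the single point $\{\pi(\omega)\}$ — equivalently, $\hat\mu_{\bf p}$ is supported on the graph $\{(\omega,\pi\sigma\omega)\}$ (this is the standard fact that the Gibbs state of a potential pulled back from the expanding coordinate sits on the natural attracting graph of the skew product $\Phi$; one can see it by noting that under $\Phi^n$ the $\Lambda$-coordinate of $(\omega,x)$ is $\phi_{\omega_n\ldots\omega_1}(x)$, whose limit as $n\to\infty$ is $\pi(\omega_n\cdots\omega_1\cdots)$ independently of $x$, so invariance forces the support onto the graph). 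Once this is in hand, the change of variables $x = \pi\sigma\omega$ turns $\chi_s(\hat\mu_{\bf p})$ into exactly $\chi(\mu_{\bf p})$, and plugging $h_\sigma(\mu_{\bf p}) = -\sum p_i\log p_i$, $o(\mathcal S,\hat\mu_{\bf p}) = o(\mathcal S,\mu_{\bf p})$, and $|\chi_s(\hat\mu_{\bf p})| = |\chi(\mu_{\bf p})|$ into the formula of Theorem \ref{thm1} yields the claimed expression.

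I expect the main (though modest) obstacle to be making the identification $\chi_s(\hat\mu_{\bf p}) = \chi(\mu_{\bf p})$ fully rigorous, i.e. pinning down that the fiber-conditionals of $\hat\mu_{\bf p}$ are Dirac masses on $\pi\sigma\omega$; everything else is bookkeeping once Theorem \ref{thm1} and the cited identity $\nu_{1,\bf p} = \nu_{2,\bf p}$ from \cite{MU-JSP2016} are granted. Alternatively, if one prefers to avoid the support-on-the-graph argument, one can invoke the exact-dimensionality conclusion of Theorem \ref{thm1} together with the Feng–Hu formula recalled in the excerpt, $\delta(\pi_*\mu_{\bf p})(\pi\omega) = h_\pi(\sigma,\mu_{\bf p})\big/\big({-\int \log|\phi'_{\omega_1}(\pi\sigma\omega)|\,d\mu_{\bf p}}\big)$, to see directly that the denominator must be $|\chi(\mu_{\bf p})|$, since both formulas compute the same pointwise dimension of the same measure $\nu_{\bf p}$; comparing numerators then even re-derives the relation $h_\pi(\sigma,\mu_{\bf p}) = h_\sigma(\mu_{\bf p}) - \log o(\mathcal S,\mu_{\bf p})$ as a by-product.
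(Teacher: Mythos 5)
Your overall strategy -- specialize Theorem \ref{thm1} to $\psi_{\bf p}(\omega)=\log p_{\omega_1}$, compute $h_\sigma(\mu_{\bf p})=-\sum_i p_i\log p_i$, use the definition (\ref{ovsc}) for the overlap number, use $\nu_{1,\bf p}=\nu_{2,\bf p}$ from \cite{MU-JSP2016} to match the left-hand sides, and reduce everything to the identity $\chi_s(\hat\mu_{\bf p})=\chi(\mu_{\bf p})$ -- is exactly the paper's. The gap is in the one step you yourself flag as the crux: your justification of $\chi_s(\hat\mu_{\bf p})=\chi(\mu_{\bf p})$ is based on a false claim. The measure $\hat\mu_{\bf p}$ is \emph{not} supported on a graph over $\Sigma_m^+$, and its fiber conditionals $\mu_\omega$ are not Dirac masses: Lemma \ref{prodstr} of the paper shows that $\mu_\omega$ is uniformly equivalent to $\nu_{2,\bf p}$ for a.e.\ $\omega$, and in the Bernoulli case $\hat\mu_{\bf p}$ is essentially the product $\mu_{\bf p}\times\nu_{\bf p}$. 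Your heuristic fails because the second coordinate of $\Phi^n(\omega,x)$, namely $\phi_{\omega_n\ldots\omega_1}(x)$, is asymptotically determined by the \emph{reversed initial block} $\omega_n\ldots\omega_1$, which is precisely the information that the first coordinate $\sigma^n\omega$ has discarded; so invariance forces no functional relation between the two coordinates, and the attractor is all of $\Sigma_m^+\times\Lambda$. Your fallback argument via the Feng--Hu formula is also not valid as stated: knowing that two fractions compute the same pointwise dimension does not let you identify their denominators (or numerators) separately, so you cannot ``see directly that the denominator must be $|\chi(\mu_{\bf p})|$'' this way without already knowing the relation between $h_\pi$ and the overlap number.

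The conclusion $\chi_s(\hat\mu_{\bf p})=\chi(\mu_{\bf p})$ is nevertheless true, and the paper's (short) proof of it is the argument you should substitute. By Birkhoff applied to $\Phi$ and $\kappa(\omega,x)=\log|\phi'_{\omega_1}(x)|$, one has $\chi_s(\hat\mu_{\bf p})=\lim_n\frac1n\log|\phi'_{\omega_n\ldots\omega_1}(x)|$ for $\hat\mu_{\bf p}$-a.e.\ $(\omega,x)$; by the Bounded Distortion Property this limit does not depend on the point $x$ at which the derivative is evaluated. On the other side, $\chi(\mu_{\bf p})=\lim_n\frac1n\log|\phi'_{\omega_1\ldots\omega_n}(\pi\sigma^n\omega)|$ for $\mu_{\bf p}$-a.e.\ $\omega$. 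The two Birkhoff sums involve the \emph{reversed} words $\omega_n\ldots\omega_1$ versus $\omega_1\ldots\omega_n$; it is exactly here that the Bernoulli hypothesis enters, since under a product measure the reversed $n$-block has the same distribution as the forward one, so the two a.e.\ limits (each constant, each independent of the base point by bounded distortion) coincide. Without this reversal-invariance the identity would not be automatic for a general equilibrium state, which is why Theorem \ref{thm1} is stated with $\chi_s(\hat\mu_\psi)$ rather than $\chi(\mu_\psi)$.
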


\

From Theorem \ref{bernou}, we obtain a necessary and sufficient condition for  \textbf{dimension drop} for self-conformal measures in IFS with overlaps in $\mathbb R^D, D \ge 1$.

\begin{cor}\label{drop}
In the setting of Theorem \ref{bernou}, a self-conformal measure $\nu_{\bf p}$ satisfies: $$HD(\nu_{\bf p}) <  \frac{ h(\mu_{\bf p})}{|\chi(\mu_{\bf p})|},$$
if and only if $o(\mathcal S, \mu_{\bf p}) >1$. 
\end{cor}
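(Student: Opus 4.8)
The plan is to deduce the corollary directly from the dimension formula established in Theorem \ref{bernou}, so almost all of the work has already been done. First I would record that for the Bernoulli measure $\mu_{\bf p}$ on $\Sigma_m^+$ the measure-theoretic entropy is $h(\mu_{\bf p}) = -\sum_{1 \le i \le m} p_i \log p_i$. Substituting this into the formula of Theorem \ref{bernou}, one gets $HD(\nu_{\bf p}) = \big(h(\mu_{\bf p}) - \log o(\mathcal S, \mu_{\bf p})\big)/|\chi(\mu_{\bf p})|$, and hence $\dfrac{h(\mu_{\bf p})}{|\chi(\mu_{\bf p})|} - HD(\nu_{\bf p}) = \dfrac{\log o(\mathcal S, \mu_{\bf p})}{|\chi(\mu_{\bf p})|}$.

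Next I would check that this quotient is well-defined and that division by $|\chi(\mu_{\bf p})|$ preserves the sign of the numerator. Since the maps $\phi_i$ of $\mathcal S$ are uniform conformal contractions of $V$, there is $s < 1$ with $|\phi_i'(y)| \le s$ for all $i \in I$ and all $y \in V$; therefore $\chi(\mu_{\bf p}) = \int_{\Sigma_m^+} \log|\phi'_{\omega_1}(\pi\sigma\omega)| \, d\mu_{\bf p}(\omega) \le \log s < 0$, so $|\chi(\mu_{\bf p})| > 0$. Thus the displayed difference is a positive multiple of $\log o(\mathcal S, \mu_{\bf p})$.

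Finally, recalling from the definition of the overlap number (and the inequality $o(\mathcal S, \mu_g) \ge 1$ noted right after it in the excerpt) that $o(\mathcal S, \mu_{\bf p}) \ge 1$, we have $\log o(\mathcal S, \mu_{\bf p}) \ge 0$, with equality precisely when $o(\mathcal S, \mu_{\bf p}) = 1$. Consequently $\dfrac{h(\mu_{\bf p})}{|\chi(\mu_{\bf p})|} - HD(\nu_{\bf p}) \ge 0$ always, and it is strictly positive if and only if $o(\mathcal S, \mu_{\bf p}) > 1$, which is exactly the asserted equivalence. There is no real obstacle here; the corollary is a one-line rearrangement of Theorem \ref{bernou}, and the only point deserving a word of care is the strict negativity of the Lyapunov exponent, which guarantees that the inequality does not degenerate and which follows at once from the uniform contraction hypothesis on $\mathcal S$.
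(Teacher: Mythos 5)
Your proposal is correct and follows exactly the route the paper intends: the corollary is presented there as an immediate consequence of the dimension formula in Theorem \ref{bernou}, using $o(\mathcal S, \mu_{\bf p}) \ge 1$ and the strict negativity of the Lyapunov exponent. Your extra remark verifying $|\chi(\mu_{\bf p})|>0$ from the uniform contraction hypothesis is a sensible (if routine) addition that the paper leaves implicit.
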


\begin{defn}\label{sepasymp}
In the above setting, let a Bernoulli measure $\mu_{\bf p}$. If $o(\mathcal S, \mu_{\bf p}) = 1$, then we say that the system $\mathcal S$ is \textbf{separated $\mu_{\bf p}$-asymptotically}.  
\end{defn}
It follows from Corollary \ref{drop} that, $HD(\nu_{\bf p}) = \frac{h(\mu_{\bf p})}{\chi(\mu_{\bf p})}$ \ if and only if the system $\mathcal S$ is separated $\mu_{\bf p}$-asymptotically. 
\newline
Clearly, if $\mathcal S$ satisfies the Open Set Condition, then $\mathcal S$ is separated $\mu_{\bf p}$-asymptotically for every $\mu_{\bf p}$.

\

\ \ \textbf{2.} \ Assume now we can  \textbf{bound} the number of intersections between images of various cylinders.
These estimates apply to a class of non-linear examples,  and are \textbf{stable under perturbations}.

Let $\mathcal S = \{\phi_i, 1 \le i \le m\}$ be a system of injective conformal contractions on a  compact set with non-empty interior $V \subset \mathbb R^D, D \ge 1$, with limit set $\Lambda$. Assume there exists an integer $q \ge 1$ and an open set $W\subset V$ so that $\phi_i(W) \subset W, 1 \le i \le m$ and the collection of initial cylinders of length $q$ from $\Sigma_m^+$ (i.e cylinders $[i_1, \ldots, i_q]$ with $i_1$ on position $1, \ldots, i_q$ on position $q$)  can be partitioned into $s$ subcollections: 
\begin{equation}\label{gj}
G_1, \ldots, G_s,
\end{equation}
 so that for any $C= [j_1\ldots j_q]\in G_i$ and $C' =[j_1'\ldots j_q'] \in G_j$ with $i \ne j, 1\le i, j \le s$, we have $$\phi_{j_1\ldots j_q}(W) \cap \phi_{j_1'\ldots j_q'}(W) = \emptyset.$$ However the images of cylinders from the same  $G_i$ can intersect in any way.
Denote  by $U_j$ the union of all cylinders from $G_j$, for $1 \le j \le s$, and let 
\begin{equation}\label{mj}
m_1:= Card \ G_1, \ldots, m_s := Card \ G_s
\end{equation}

In the above notation, we obtain next a computable lower bound for the dimension of $\pi_2$-image measures, by using the Borel measurable function 
$\theta:\Sigma_m^+ \to \mathbb R$, $$\theta(\omega) = \log m_j, \ \text{for} \ \omega \in U_j,  1 \le j \le s$$

\begin{cor}\label{qint}
In the above setting,
consider  $\psi: \Sigma_m^+ \to \mathbb R$ a H\"older continuous potential, and let $\mu_\psi$ be its equilibrium measure with respect to $\sigma$ on $\Sigma_m^+$, and $\hat\mu_\psi$ be the equilibrium measure of $\psi\circ \pi_1$ with respect to $\Phi$ on $\Sigma_m^+ \times \Lambda$; recall also the above notation for the function $\theta$. Then, $$HD(\pi_{2*}\hat\mu_\psi) \ge \frac{h(\mu_\psi) - \frac 1q \int_{\Sigma_m^+}\ \theta \ d\mu_\psi}{|\chi_s(\hat\mu_\psi)|}.$$
\end{cor}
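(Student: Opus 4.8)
The plan is to derive Corollary \ref{qint} from Theorem \ref{thm1} by bounding the overlap number $o(\mathcal S, \hat\mu_\psi)$ from above in terms of the combinatorial data $G_1, \ldots, G_s$ and the function $\theta$. Recall from \eqref{oS} that $\log o(\mathcal S, \hat\mu_\psi) = F_\Phi(\hat\mu_\psi) = \int \log J_\Phi(\hat\mu_\psi)\, d\hat\mu_\psi$, and that by definition $o(\mathcal S, \hat\mu_\psi) = \exp\big(\lim_{\tau\to 0}\lim_{n\to\infty}\frac 1n \int \log b_n((\omega,x),\tau,\hat\mu_\psi)\, d\hat\mu_\psi\big)$, where $b_n$ counts the generic length-$n$ preimage words sharing the same $n$-th iterate. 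Since $b_n \le \beta_n$ counts all such words (generic or not), it suffices to bound $\frac 1n \int \log \beta_n(\pi_2\Phi^n(\omega,x))\, d\hat\mu_\psi$ uniformly. So the first step is to establish the counting inequality: for every point the number of length-$n$ words $(\eta_1,\ldots,\eta_n)$ with $\phi_{\eta_n\ldots\eta_1}(\Lambda)$ hitting a fixed point is controlled by the number of ways one can concatenate the blocks $G_j$ so that consecutive $q$-blocks land in overlapping images.

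The key combinatorial point is the separation hypothesis: images $\phi_{j_1\ldots j_q}(W)$ and $\phi_{j'_1\ldots j'_q}(W)$ are disjoint whenever the two $q$-cylinders lie in different subcollections $G_i, G_j$. Grouping a length-$n$ word (say $n = kq$ for simplicity, handling the remainder as a bounded error) into $k$ consecutive blocks of length $q$, for a generic point $x$ the $r$-th block is forced to lie in the subcollection $G_{j}$ determined by which $U_j$ contains the relevant shifted coordinate — so once the ``type sequence'' $(j_1,\ldots,j_k)$ is pinned down by the point, the number of admissible words is at most $\prod_{r=1}^k m_{j_r} = \prod_{r} \mathrm{Card}\, G_{j_r}$. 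Taking logarithms, $\log \beta_{kq}(\cdot) \le \sum_{r=1}^k \log m_{j_r}$, and this sum is, by construction of $\theta$, exactly a Birkhoff-type sum $\sum_{r=0}^{k-1} \theta(\sigma^{rq}\omega')$ of $\theta$ along the $\sigma^q$-orbit, where $\omega'$ is the reversed word. The second step is to take the $\hat\mu_\psi$-average, apply the ergodic theorem (using that $\hat\mu_\psi$ is $\Phi$-invariant and $\pi_{1*}\hat\mu_\psi = \mu_\psi$ is $\sigma$-invariant, hence $\sigma^q$-invariant), and conclude $\lim_n \frac 1n \int \log \beta_n\, d\hat\mu_\psi \le \frac 1q \int \theta\, d\mu_\psi$. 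Hence $\log o(\mathcal S,\hat\mu_\psi) \le \frac 1q \int_{\Sigma_m^+}\theta\, d\mu_\psi$.

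The third and final step is purely a substitution: plug this upper bound for $\log o(\mathcal S, \hat\mu_\psi)$ into the dimension formula of Theorem \ref{thm1}, namely $HD(\pi_{2*}\hat\mu_\psi) = \frac{h_\sigma(\mu_\psi) - \log o(\mathcal S,\hat\mu_\psi)}{|\chi_s(\hat\mu_\psi)|}$. Since the numerator is a decreasing function of $\log o(\mathcal S,\hat\mu_\psi)$ and the denominator is positive, replacing $\log o(\mathcal S,\hat\mu_\psi)$ by the larger quantity $\frac 1q\int\theta\, d\mu_\psi$ can only decrease the fraction, giving the stated inequality $HD(\pi_{2*}\hat\mu_\psi) \ge \frac{h(\mu_\psi) - \frac 1q \int \theta\, d\mu_\psi}{|\chi_s(\hat\mu_\psi)|}$.

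The main obstacle I expect is the second step — carefully justifying that the generic preimage count $b_n$ (and more generally $\beta_n$ restricted to the relevant fibers) really is bounded by the block-product $\prod_r m_{j_r}$ along a trajectory, and that the ``type sequence'' is indeed determined up to the right asymptotic density by the point under $\Phi$-iteration. One has to track the order-reversal in $\Phi^n(\omega,x) = (\sigma^n\omega, \phi_{\omega_n\ldots\omega_1}(x))$, handle the non-divisible case $n \not\equiv 0 \pmod q$ via a bounded additive error that vanishes after dividing by $n$, and make sure the invariance $\phi_i(W)\subset W$ is used to keep all the relevant images inside $W$ so the disjointness hypothesis on the $G_j$ applies at every scale. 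Once the counting bound is in place, the ergodic-theorem averaging and the final monotonicity argument are routine.
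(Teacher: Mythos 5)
Your proposal is correct and follows essentially the same route as the paper: bound $b_n \le \beta_n\circ\pi_2\circ\Phi^n$, use the disjointness of images of $q$-cylinders from different $G_j$'s to get $\log\beta_{qn}(\pi\omega)\le S_{n,q}\theta(\omega)$, average using the invariance of $\hat\mu_\psi$ (the paper handles the order-reversal you flag via a lift $\tilde\Phi$ on $\Sigma_m^+\times\Sigma_m^+$ and the fact that $\theta$ is constant on $q$-cylinders), and substitute the resulting bound $\log o(\mathcal S,\hat\mu_\psi)\le \frac1q\int\theta\,d\mu_\psi$ into Theorem \ref{thm1}. The obstacles you identify (order reversal, $n\not\equiv 0\bmod q$, keeping images inside $W$) are exactly the points the paper addresses, and your treatment of them is adequate.
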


For self-conformal measures, we obtain from Theorem \ref{bernou} and Corollary \ref{qint} the following:

\begin{cor}\label{scm}
In the setting of Corollary \ref{qint}, let an arbitrary Bernoulli measure $\mu_{\bf p}$ given by the probability vector $\bf p$, and denote its associated self-conformal measure $\nu_{\bf p}$ on the limit set $\Lambda$. Recall the notation in (\ref{mj}). Then the dimension of the self-conformal measure $\nu_{\bf p}$ satisfies:

$$HD(\nu_{\bf p}) \ge \frac{-\mathop{\sum}\limits_{1\le i \le m} p_i\log p_i - \frac{1}{q}\cdot \mathop\sum\limits_{1 \le i \le s}\log m_i \cdot \mathop{\sum}\limits_{[j_1, \ldots, j_q] \in G_i} p_{j_1}\ldots p_{j_q}}{|\chi(\mu_p)|}.$$ 
\end{cor}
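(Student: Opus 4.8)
The plan is to obtain this statement as a direct specialization of Corollary~\ref{qint} to the H\"older potential whose equilibrium state is the Bernoulli measure $\mu_{\bf p}$, after which each term on the right-hand side can be written out explicitly. Recall from Case~1 that every Bernoulli measure $\mu_{\bf p}$ on $\Sigma_m^+$ is the equilibrium measure of some H\"older continuous potential $\psi_{\bf p}$. Hence Corollary~\ref{qint} applies with $\psi = \psi_{\bf p}$ and $\mu_\psi = \mu_{\bf p}$, and gives
$$HD(\pi_{2*}\hat\mu_{\bf p}) \ \ge \ \frac{h(\mu_{\bf p}) - \frac 1q \int_{\Sigma_m^+}\theta \ d\mu_{\bf p}}{|\chi_s(\hat\mu_{\bf p})|}.$$

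Next I would identify the relevant quantities. By the discussion of Case~1 (following \cite{MU-JSP2016}), for a Bernoulli measure one has $\nu_{1,\bf p} = \nu_{2,\bf p}$; combined with $\pi_{1*}\hat\mu_{\bf p} = \mu_{\bf p}$, this yields $\pi_{2*}\hat\mu_{\bf p} = \nu_{2,\bf p} = \nu_{1,\bf p} = (\pi\circ\pi_1)_*\hat\mu_{\bf p} = \pi_*\mu_{\bf p} = \nu_{\bf p}$, so $HD(\pi_{2*}\hat\mu_{\bf p}) = HD(\nu_{\bf p})$. For the denominator, exactly as in the passage from Theorem~\ref{thm1} to Theorem~\ref{bernou}, one uses that $\hat\mu_{\bf p}$ is the image of a two-sided Bernoulli measure, which is invariant under time reversal, to conclude that $\chi_s(\hat\mu_{\bf p}) = \chi(\mu_{\bf p})$, so $|\chi_s(\hat\mu_{\bf p})| = |\chi(\mu_{\bf p})|$.

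It remains to evaluate the numerator. The entropy of a Bernoulli measure is the classical value $h(\mu_{\bf p}) = -\sum_{1\le i\le m} p_i \log p_i$. For the integral, recall that $\theta$ equals the constant $\log m_j$ on $U_j$, where $U_j$ is the union of the length-$q$ initial cylinders belonging to $G_j$; since $G_1,\dots,G_s$ is a partition of the collection of all length-$q$ initial cylinders, the sets $U_1,\dots,U_s$ form a partition of $\Sigma_m^+$. Using that $\mu_{\bf p}$ is a product measure, so $\mu_{\bf p}([j_1\dots j_q]) = p_{j_1}\cdots p_{j_q}$, we obtain
$$\int_{\Sigma_m^+}\theta \ d\mu_{\bf p} \ = \ \sum_{1\le j\le s}\log m_j \cdot \mu_{\bf p}(U_j) \ = \ \sum_{1\le j\le s}\log m_j \sum_{[j_1,\dots,j_q]\in G_j} p_{j_1}\cdots p_{j_q}.$$
Substituting these evaluations into the inequality of Corollary~\ref{qint} gives precisely the claimed lower bound. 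Equivalently, invoking the exact formula of Theorem~\ref{bernou}, the inequality amounts to the estimate $\log o(\mathcal S,\mu_{\bf p}) \le \frac1q \sum_{1\le j\le s}\log m_j \sum_{[j_1,\dots,j_q]\in G_j} p_{j_1}\cdots p_{j_q}$, i.e.\ the right-hand side bounds the overlap number of $\mu_{\bf p}$.

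Since the argument is only an instantiation of results already established, there is no genuine obstacle. The only points requiring attention are the two identifications $\pi_{2*}\hat\mu_{\bf p} = \nu_{\bf p}$ and $\chi_s(\hat\mu_{\bf p}) = \chi(\mu_{\bf p})$, both inherited from the Bernoulli-case analysis already used for Theorem~\ref{bernou}, and keeping track of the fact that $U_1,\dots,U_s$ genuinely partition $\Sigma_m^+$, which is what makes the expectation of $\theta$ collapse to a finite sum over $j$.
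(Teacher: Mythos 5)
Your proposal is correct and follows exactly the route the paper intends: the paper derives Corollary \ref{scm} by combining Theorem \ref{bernou} (or equivalently Corollary \ref{qint} specialized to the Bernoulli potential $\psi_{\bf p}$) with the identifications $\pi_{2*}\hat\mu_{\bf p}=\nu_{\bf p}$ and $\chi_s(\hat\mu_{\bf p})=\chi(\mu_{\bf p})$ from the Bernoulli-case analysis, together with the evaluation $\int\theta\, d\mu_{\bf p}=\sum_j \log m_j\,\mu_{\bf p}(U_j)$. Your write-up simply makes explicit the instantiation the paper leaves to the reader, and it is sound.
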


\

As an example, consider  the non-linear system given by the contractions
$$F_j(x) = \lambda x+ \vp x^2 +\vp x^3 + \lambda j, \ j \in \{0, 1, 3\},$$ where $\lambda \in [\frac 14, \frac 13]$ and $\vp\ge0$ is sufficiently small.
The limit set $\Lambda_{\lambda, \vp}$ of this system is contained in the interval $[0, \frac{3\lambda}{1-\lambda}+\delta(\vp)]$, for some $\delta(\vp) \to 0$ when $\vp \to 0$. Notice that only $F_0(I_\lambda)$ and $F_1(I_\lambda)$ intersect, so in this case we can take $q=1$ for instance. This can be used in the estimate from Corollary \ref{qint} above, with $q=1$, $k_1 = 2, k_2 = 3$. 
Thus for any probability vector $\textbf{p} = (p_1, p_2, p_3)$,  the self-conformal measure  $\nu_{\bf p}$ on $\Lambda_{\lambda, \vp}$ satisfies:
$$HD(\nu_{\bf p}) \ge \frac{-p_1\log p_1 - p_2\log p_2 - p_3 \log p_3 - \log 2 (p_2+p_1)}{|\log\lambda| + \delta(\vp)},$$ where $\delta(\vp) \mathop{\longrightarrow}\limits_{\vp \to 0} 0$. This estimate can be improved by increasing $q$ for fixed $\lambda$.

\

\ \ \textbf{3.} \ Another application is to order-reversing $\pi_2$-projection measures on \textbf{mixed Julia sets}.  \ 
Mixed Julia sets appear as limit sets of iterated function systems formed with the inverse branches of finitely many rational maps which are expanding on a common open set $V \subset \mathbb C$. Indeed, let us consider $m$ rational maps $R_1, \ldots, R_m$ and assume that their respective Julia sets $J(R_1), \ldots, J(R_m)$ are contained in $V$, and that all $R_j$ are expanding on $V$, for $j = 1, \ldots, m$. 
Now assume the degree of $R_j$ is equal to $d_j$, for $j = 1, \ldots, m$ and that $f_{j, k}, k = 1, \ldots, d_j$ denote the inverse branches $R^{-1}_{j, k}$ of $R_j$ on $V$, for $j = 1, \ldots, m$. Since we assumed that the rational maps $R_j$ are expanding, we obtain a conformal iterated function system consisting of contractions, $$\mathcal S = \{f_{j, k}, 1\le k \le d_j, 1\le j \le m\}$$
We obtain thus a limit set $J(R_1, \ldots, R_m)$ of the system $\mathcal S$, which we call a \textbf{mixed Julia set}. Then if $\psi$ is a H\"older continuous potential on $\Sigma_{d_1+\ldots +d_m}^+$ and $\hat \mu_\psi$ is the equilibrium measure of $\psi\circ \pi_1$ on $\Sigma_m^+\times J(R_1, \ldots, R_m)$, and if $\nu_{2, \psi}$ is the $\pi_2$-projection, $\nu_{2, \psi} = \pi_{2*}\hat\mu_\psi$ on $J(R_1, \ldots, R_m)$, then we obtain from Theorem 1 the exact dimensionality of $\nu_{2, \psi}$ and its dimension:

\begin{cor}\label{mixedJulia}
In the above setting, let $R_j$ be a rational map of degree $d_j$ for any $j = 1, \ldots, m$, and suppose that all the maps $R_j$ are expanding on their respective Julia sets which are  contained in an open set $V\subset \mathbb C$. Consider the system formed by the inverse branches of the maps $R_j$,  $$\mathcal S = \{f_{j, k}, \ k = 1, \ldots, d_j, j = 1, \ldots, m\}$$  Let  $\psi$ H\"older continuous potential on $\Sigma_m^+$, and $\hat\mu_\psi$ be the equilibrium state of $\psi\circ \pi_1$. Then  $\nu_{2, \psi} := \pi_{2*}\hat\mu_\psi$ is exact dimensional on $J(R_1, \ldots, R_m)$, and $HD(\nu_{2, \psi})$ is given by  Theorem \ref{thm1}.
\end{cor}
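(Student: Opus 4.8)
The plan is to verify that the inverse-branch construction produces a conformal IFS of the type covered by Theorem \ref{thm1}, so that Theorem \ref{thm1} applies verbatim. First I would check that each $f_{j,k}$ is a well-defined conformal injective contraction on a suitable compact set with nonempty interior. Since each $R_j$ is expanding on an open neighborhood of its Julia set $J(R_j) \subset V$, there is $\rho > 1$ and an open set $V' \subseteq V$ (I may shrink $V$ if necessary, taking a common forward-invariant open set for all the $R_j^{-1}$) on which $|R_j'| \ge \rho$; hence each local inverse branch $f_{j,k} = R_{j,k}^{-1}$ is holomorphic, injective, and satisfies $|f_{j,k}'| \le \rho^{-1} < 1$ there. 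Passing to a compact set $\ovl{V''}$ with $V'' $ open, nonempty, and $f_{j,k}(\ovl{V''}) \subset V''$ for all $j,k$ — obtained by standard arguments for expanding rational maps — gives us exactly a finite conformal IFS $\mathcal S = \{f_{j,k}\}$ on a compact set with nonempty interior in $\mathbb R^2 \cong \mathbb C$, in the sense of the hypotheses of Theorem \ref{thm1}, with $D = 2$.

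Next I would identify the limit set. By the very definition of $\Lambda$ in the introduction, $\Lambda = \bigcup_{\omega \in \Sigma_{d_1+\cdots+d_m}^+} \bigcap_{n\ge 0} f_{\omega_1}\circ\cdots\circ f_{\omega_n}(\ovl{V''})$, and a short argument shows this coincides with $J(R_1,\ldots,R_m)$ as defined in the paper (it is the closure of the set of all points reachable by arbitrary compositions of inverse branches; when $m=1$ it recovers $J(R_1)$). No separation condition is imposed, which is essential since the Julia sets $J(R_j)$ — and the images $f_{j,k}(\Lambda)$ — may overlap in an arbitrary fashion; this is precisely the ``with overlaps'' situation Theorem \ref{thm1} is built to handle.

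With the IFS verified, the skew product $\Phi$ on $\Sigma_m^+ \times J(R_1,\ldots,R_m)$, the potential $\hat\psi = \psi \circ \pi_1$, its equilibrium measure $\hat\mu_\psi$, and the pushforward $\nu_{2,\psi} = \pi_{2*}\hat\mu_\psi$ are all defined exactly as in the general setting (here $\pi_1$ projects onto $\Sigma_m^+$ where $m$ should be read as $d_1+\cdots+d_m$, the total number of branches). Theorem \ref{thm1} then immediately yields that $\nu_{2,\psi}$ is exact dimensional on $J(R_1,\ldots,R_m)$ and that for $\nu_{2,\psi}$-a.e. $x$,
$$HD(\nu_{2,\psi}) = \delta(\nu_{2,\psi})(x) = \frac{h_\sigma(\mu_\psi) - \log o(\mathcal S, \hat\mu_\psi)}{|\chi_s(\hat\mu_\psi)|},$$
which is the assertion of the corollary. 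The only genuine content of the proof — and the one place where care is needed — is the first step: checking that expansion of $R_j$ on a neighborhood of the Julia set allows one to pick a single compact set with nonempty interior that is mapped into its own interior by every inverse branch simultaneously, so that $\mathcal S$ literally satisfies the standing hypotheses. Everything after that is a direct citation of Theorem \ref{thm1}.
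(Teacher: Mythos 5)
Your proposal is correct and follows the same route as the paper, which treats this corollary as an immediate application of Theorem \ref{thm1} once one observes that the inverse branches of the expanding maps $R_j$ form a finite conformal IFS of injective contractions (with no separation condition needed) whose limit set is the mixed Julia set. The paper gives no further argument, so your careful verification of the standing hypotheses (absence of critical points in $V$ due to expansion, choice of a common forward-invariant compact set for the branches) only makes explicit what the paper leaves implicit.
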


As an example,  consider the rational maps $$R_j(z) = \gamma_j z^{d_j} + \vp_1 z^{d_j -1}  + \ldots + \vp_{d_j-1} z + c_j, \ j = 1, \ldots, m$$ If $|\gamma_j| = 1$ and $|\vp_1|, \ldots, |\vp_{d_j-1}|, |c_j|$ are all small enough for $1 \le j \le m$, then  the rational maps $R_j$ are expanding on an open neighbourhood $V$ of the unit circle $S^1$, and thus we obtain a contractive IFS consisting of  their inverse branches on $V$, $\mathcal S = \{f_{j, k}, \ k = 1, \ldots, d_j, j = 1, \ldots, m\}$.
Now let us take a H\"older continuous potential $\psi$ on $\Sigma_{d_1+\ldots d_m}^+$ and let $\hat\mu_\psi$ be the equilibrium measure of $\psi\circ \pi_1$. Then the dimension of $\nu_{2, \psi}$ on the mixed Julia set can be computed by Corollary \ref{mixedJulia}. 


\section{Proofs.}

Recall the setting from Section 1, where $\psi$ is a H\"older continuous potential on $\Sigma_I^+$, $\mu^+$ is the equilibrium measure of $\psi$ on $\Sigma_I^+$, and $\hat\mu$ denotes the equilibrium measure $\mu_{\hat\psi}$ of $\hat\psi:=\psi\circ \pi_1$ on $\Sigma_I^+\times \Lambda$. 
Consider the measurable partition $\xi$ of $\Sigma_I^+\times \Lambda$ with the fibers of the projection $\pi_1: \Sigma_I^+\times \Lambda \to \Lambda$, and the associated conditional measures $\mu_\omega$ of $\hat \mu= \mu_{\hat\psi}$ defined for $\mu^+$-a.e $\omega \in \Sigma_I^+$ (see \cite{Ro}); from above, $\mu^+ = \pi_{1*}\hat\mu$. For $\mu^+$-a.e $\omega \in \Sigma_I^+$, the conditional measure $\mu_\omega$ is defined on $\pi_1^{-1}\omega = \{\omega\}\times\Lambda$.
It is clear that the factor space $\Sigma_I^+\times \Lambda/\xi$ is equal to $\Sigma_I^+$, and the  factor measure of $\hat\mu$ satisfies, $$\hat\mu_\xi(A) = \hat\mu(A\times \Lambda) = \mu^+(A),$$
for any measurable set $A \subset \Sigma_I^+$. Thus $\hat\mu_\xi = \mu^+$.
For any borelian set $E$ in $\Sigma_I^+\times \Lambda$ we have, 
\begin{equation}\label{desint}
\hat\mu(E) = \int_{\pi_1E}(\int_{\{\omega\}\times\Lambda}\chi_E d\mu_\omega) \ d\mu^+(\omega) = \int_{\pi_1E} \mu_\omega(E \cap \{\omega\}\times\Lambda) \ d\mu^+(\omega)
\end{equation}
For a Borel set $A$ in $\Lambda$, we have for $\mu^+$-a.e $\omega \in \Sigma_I^+$, 
\begin{equation}\label{muomega}
\mu_\omega(A) = \mathop{\lim}\limits_{n\to \infty}\frac{\hat\mu([\omega_1\ldots \omega_n]\times A)}{\mu^+([\omega_1\ldots \omega_n])}
\end{equation}

\textit{Notation.} 
Two quantities $Q_1, Q_2$ are called \textit{comparable}, denoted by $Q_1 \approx Q_2$, if $\exists C>0$ independent of  parameters in $Q_1, Q_2$, with  $\frac 1C Q_1 \le Q_2 \le C Q_1$. 
$\hfill\square$

The above conditional measures $\mu_\omega$ are defined on $\{\omega\}\times\Lambda$, so they can be considered 
 on $\Lambda$. We now  compare $\mu_\omega(A)$ with $\mu_\eta(A)$.

\begin{lem}\label{prodstr}
There exists a constant $C>0$ so that  for $\mu^+$-a.e $\omega, \eta \in \Sigma_I^+$ and any Borel set $A \subset \Lambda$, 
$\frac 1C \mu_\eta(A) \le \mu_\omega(A) \le C\mu_\eta(A)$.
For any Borel sets $A_1 \subset \Sigma_I^+, A_2 \subset \Lambda$, and $\mu^+$-a.e $\omega\in \Sigma_I^+$, we have:
$$\frac 1C \mu^+(A_1)\cdot\mu_\omega(A_2) \le \hat\mu(A_1 \times A_2) \le C \mu^+(A_1) \cdot \mu_\omega(A_2)$$
In particular there is a constant $C>0$ such that, for $\mu^+$-a.e $\omega \in \Sigma_I^+$ and any Borel set $A\subset \Lambda$,  
$$\frac 1C \mu_\omega(A) \le \nu_2(A) \le C \mu_\omega(A)$$  
\end{lem}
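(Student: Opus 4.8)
The plan is to reduce the statement to its first assertion and then prove that one by transporting the comparison to the symbol space $\Sigma_I^+$, where a comparison on cylinders automatically becomes a comparison on all Borel sets. For the reduction: if $C^{-1}\mu_\eta(B)\le\mu_\omega(B)\le C\mu_\eta(B)$ holds for $\mu^+$-a.e. $\omega,\eta$ and every Borel $B\subset\Lambda$, then by the disintegration (\ref{desint}), $\hat\mu(A_1\times A_2)=\int_{A_1}\mu_\omega(A_2)\,d\mu^+(\omega)$ lies in $[C^{-1},C]\cdot\mu^+(A_1)\mu_\eta(A_2)$ for any fixed $\eta$ in the relevant full-measure set; taking $A_1=\Sigma_I^+$ gives $\nu_2(A_2)=\hat\mu(\Sigma_I^+\times A_2)\in[C^{-1},C]\cdot\mu_\eta(A_2)$. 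So it suffices to find $C>0$ with $C^{-1}\mu_\eta(A)\le\mu_\omega(A)\le C\mu_\eta(A)$ for $\mu^+$-a.e. $\omega,\eta$ and all Borel $A\subset\Lambda$.

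Next I would establish an IFS-type invariance equation for the fiber measures. Using the $\Phi$-invariance of $\hat\mu$, the identity $\Phi^{-1}\big([\omega_1\ldots\omega_n]\times A\big)=\bigsqcup_{i\in I}[i\,\omega_1\ldots\omega_n]\times\{y\in\Lambda:\phi_i(y)\in A\}$, and (\ref{desint}), one gets $\hat\mu([\omega_1\ldots\omega_n]\times A)=\sum_i\int_{[i\,\omega_1\ldots\omega_n]}\mu_\zeta(\{\phi_i\in A\})\,d\mu^+(\zeta)$; dividing by $\mu^+([\omega_1\ldots\omega_n])$, letting $n\to\infty$ and invoking (\ref{muomega}) and martingale convergence for the conditional expectations $\zeta\mapsto\mu_\zeta(\cdot)$, one obtains for $\mu^+$-a.e. $\omega$
$$\mu_\omega=\sum_{i\in I}w_i(\omega)\,(\phi_i)_*\mu_{i\omega},\qquad w_i(\omega):=\lim_{m\to\infty}\frac{\mu^+([i\,\omega_1\ldots\omega_m])}{\mu^+([\omega_1\ldots\omega_m])},\quad\sum_{i\in I}w_i(\omega)=1.$$
Since the $\phi_i$ are uniform contractions (ratio $\le\theta<1$), this equation has a \emph{unique} measurable solution in Borel probabilities on $\Lambda$: iterating gives $\mu_\omega=\sum_{u\in I^n}w_u(\omega)(\phi_{u_1\ldots u_n})_*\mu_{u_n\ldots u_1\omega}$ with $\sum_u w_u(\omega)=1$, and testing the difference of two solutions against a Lipschitz $g$ leaves only the oscillation of $g\circ\phi_{u_1\ldots u_n}$, which is $O(\theta^n)\to0$. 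An explicit solution is $\mu_\omega=\pi_*\tilde\mu_\omega$, where $\tilde\mu_\omega$ is the Borel probability on $\Sigma_I^+$ with $\tilde\mu_\omega([v_1\ldots v_k]):=\lim_m\mu^+([v_k\ldots v_1\,\omega_1\ldots\omega_m])/\mu^+([\omega_1\ldots\omega_m])$ (a consistent cylinder family; equivalently, $\tilde\mu_\omega$ is the conditional, on the local stable set $\{\eta_{\ge1}=\omega\}$, of the Gibbs measure of $\psi$ on the natural extension of $(\Sigma_I^+\times\Lambda,\Phi,\hat\mu)$, i.e. on the two-sided shift). Indeed $\pi\circ\iota_i=\phi_i\circ\pi$ for the prepending map $\iota_i(v)=(i,v)$, and $\tilde\mu_\omega=\sum_iw_i(\omega)(\iota_i)_*\tilde\mu_{i\omega}$, so $(\pi_*\tilde\mu_\omega)$ solves the same equation; by uniqueness $\mu_\omega=\pi_*\tilde\mu_\omega$ for $\mu^+$-a.e. $\omega$.

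Then the comparison is carried out on $\Sigma_I^+$. By the Gibbs property of $\mu^+$ (constant $D$) and the identity $S_{k+m}\psi(v_k\ldots v_1\omega)-S_m\psi(\omega)=S_k\psi(v_k\ldots v_1\omega)$, one has $\tilde\mu_\omega([v_1\ldots v_k])\in[D^{-2},D^2]\exp(S_k\psi(v_k\ldots v_1\omega)-kP_\sigma(\psi))$ for all $k,v,\omega$. Since $v_k\ldots v_1\omega$ and $v_k\ldots v_1\eta$ agree in their first $k$ symbols, Hölder continuity gives $|S_k\psi(v_k\ldots v_1\omega)-S_k\psi(v_k\ldots v_1\eta)|\le\sum_{j=1}^k\mathrm{var}_j(\psi)\le\sum_{j\ge1}\mathrm{var}_j(\psi)=:V_\psi<\infty$, uniformly in $k,v,\omega,\eta$; hence $\tilde\mu_\omega([v_1\ldots v_k])/\tilde\mu_\eta([v_1\ldots v_k])\in[D^{-4}e^{-V_\psi},D^4e^{V_\psi}]$ for every cylinder. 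As cylinders of $\Sigma_I^+$ are clopen and every open subset of $\Sigma_I^+$ is a countable disjoint union of cylinders, this passes to open sets and, by outer regularity, to all Borel subsets of $\Sigma_I^+$ with the single constant $C:=D^4e^{V_\psi}$. Finally, for Borel $A\subset\Lambda$ the set $\pi^{-1}A$ is Borel and $\mu_\omega(A)=\tilde\mu_\omega(\pi^{-1}A)$, so $C^{-1}\mu_\eta(A)\le\mu_\omega(A)\le C\mu_\eta(A)$, proving the first assertion and hence the Lemma.

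The hard part is not the Gibbs estimate but recognizing that the comparison must be made on $\Sigma_I^+$ (or on the natural extension). A direct attack on $\Lambda$ --- expanding $\mu_\omega(A)$ over the images $\phi_{u_1\ldots u_n}(\Lambda)$ --- controls only the words whose image lies well inside or well outside $A$; the words whose image straddles the boundary of $A$ contribute $\mu_{u_n\ldots u_1\omega}(\phi_{u_1\ldots u_n}^{-1}A)\in(0,1)$, which one can bound only by invoking the very comparability one is proving, producing a vacuous bootstrap $C\le(\mathrm{const})\cdot C$. Passing to a totally disconnected model space removes the obstruction, since there a cylinderwise comparison is a Borel comparison; the remaining technical care lies in making the invariance equation and the identification $\mu_\omega=\pi_*\tilde\mu_\omega$ precise (the order of the ``a.e. $\omega$ / all Borel $A$'' quantifiers, restricting to a countable generating algebra and a full-measure set closed under $\omega\mapsto i\omega$, and the convergence of the averages of conditional measures).
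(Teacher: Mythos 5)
Your proof is correct (modulo the standard measure\hyphenation{theo-retic}-theoretic care you yourself flag: the order of the ``a.e.\ $\omega$ / all Borel $A$'' quantifiers, a full-measure set closed under prepending, and the a.e.\ existence of the limits $w_i(\omega)$, all of which are routine for Gibbs states of H\"older potentials), but it takes a genuinely different route from the paper's. The paper stays on $\Lambda$: it covers $A$ by small balls, iterates the invariance relation $\hat\mu([\omega_1\ldots\omega_n]\times A)=\sum_i\hat\mu([i\omega_1\ldots\omega_n]\times\phi_i^{-1}A)$ to \emph{inflate} each ball along every backward branch up to a fixed scale $r_0$, so that the resulting sets $[i_{p}\ldots i_1\omega_1\ldots\omega_n]\times\phi_{i_1\ldots i_p}^{-1}B$ become Bowen balls for $\Phi$, and then applies the Gibbs property of $\hat\mu$ on Bowen balls, under which the weight factors as $\mu^+([\omega_1\ldots\omega_n])$ times an $\omega$-independent quantity; the comparison of $\mu_\omega$ with $\mu_\eta$ then follows from (\ref{muomega}). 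You instead build the symbolic model: you derive the self-consistency equation $\mu_\omega=\sum_i w_i(\omega)(\phi_i)_*\mu_{i\omega}$, prove uniqueness of its solution by a contraction estimate, and identify $\mu_\omega=\pi_*\tilde\mu_\omega$ with $\tilde\mu_\omega$ the conditional of the two-sided Gibbs state on the past; the comparison then reduces to a cylinderwise Gibbs-plus-H\"older bound, and the total disconnectedness of $\Sigma_I^+$ upgrades it to all Borel sets. Your route costs more setup (essentially constructing the natural extension), but it makes the passage from cylinders to arbitrary Borel sets completely clean and sidesteps the geometric steps the paper relies on (``cover $A$ by small disjoint balls modulo $\hat\mu$'', ``without loss of generality $\phi_{i_1\ldots i_{p(\underline i)}}^{-1}B$ is a ball''), which are exactly the points where overlaps make a direct argument on $\Lambda$ delicate. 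Both proofs ultimately rest on the same mechanism: up to uniform multiplicative constants, the Gibbs weight of a word $v_k\ldots v_1\omega_1\ldots\omega_n$ splits into a past factor and a future factor.
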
 

\begin{proof}
First recall formula (\ref{muomega}) for the conditional measure $\mu_\omega$. From the $\Phi$-invariance of $\hat\mu$, 
\begin{equation}\label{inv}
\hat\mu([\omega_1\ldots \omega_n]\times A) = \mathop{\sum}\limits_{i \in I} \hat\mu([i\omega_1\ldots \omega_n]\times \phi_i^{-1}A)
\end{equation}
Now we can cover the set $A$ with small disjoint balls (modulo $\hat\mu$), so it is enough to consider such a small ball $B = A \subset \Lambda$. The general case will follow then from this.

Recall that for any $i_1, \ldots, i_n \in I, n \ge 1$,  $\phi_{i_1\ldots i_n} := \phi_{i_1}\circ \ldots \circ \phi_{i_n}$. We have Bounded Distortion Property, due to conformal contractions $\phi_i$; i.e $\exists$ a constant $C>0$ so that for any $x, y, n, i_1, \ldots, i_n$, we have $|\phi'_{i_1\ldots i_n}(x)| \le C |\phi_{i_1\ldots i_n}'(y)|$. Since the contractions $\phi_i$ are conformal, let  $i_1, \ldots, i_p \in I$ such that $\phi_{i_p}^{-1}\ldots \phi_{i_1}^{-1}B = \phi_{i_1\ldots i_p}^{-1}B$ is a ball $B(x_0, r_0)$ of a fixed radius $r_0$.
In this way we inflate $B$ along any backward trajectory $\underline i = (i_1, i_2, \ldots) \in \Sigma_I^+$ up to some maximal order $p(\underline i) \ge 1$, so that $\phi_{i_1\ldots i_{p(\underline i)}}^{-1} B $ contains a ball of radius $C_1 r_0$ and it is contained in a ball of radius $ r_0$, for a constant $C_1$ independent of $B, \underline i$. Then by using successively the $\Phi$-invariance of $\hat\mu$, relation (\ref{inv}) becomes:
\begin{equation}\label{inve}
\hat\mu([\omega_1\ldots \omega_n]\times B) = \mathop{\sum}\limits_{\underline i \in I} \hat\mu\big([i_{p(\underline i)}\ldots i_1 \omega_1\ldots \omega_n]\times \phi_{i_1\ldots i_{p(\underline i)}}^{-1}B\big)
\end{equation}
Without loss of generality one can assume that  $\phi_{i_1\ldots i_{p(\underline i)}}^{-1} B $ is a ball of radius $r_0$. Notice that the set $[i_{p(\underline i)}\ldots i_1\omega_1\ldots \omega_n]\times \phi_{i_1\ldots i_{p(\underline i)}}^{-1}B$ is the Bowen ball $[i_{p(\underline i)}\ldots i_1\omega_1\ldots \omega_n] \times B(x_0, r_0)$ for $\Phi$. Since $\hat\mu$ is the equilibrium state of $\psi \circ \pi_1$, and since $P_\Phi(\psi \circ \pi_1) = P_\sigma(\psi):= P(\psi)$, we have:
\begin{equation}\label{Bx0}
\begin{aligned}
\hat\mu([i_{p(\underline i)}\ldots i_1\omega_1\ldots \omega_n] \times \phi_{i_1\ldots i_{p(\underline i})}^{-1}B) &\approx \exp(S_{n+p(\underline i)}\psi(i_{p(\underline i)}\ldots i_1\omega_1\ldots \omega_n)-(n+p(\underline i))P(\psi))\approx\\
&\approx \mu^+([\omega_1\ldots \omega_n]) \cdot \hat \mu([i_{p(\underline i)}\ldots i_1]\times \phi_{i_1\ldots i_{p(\underline i)}}^{-1}B),
\end{aligned}
\end{equation}
where the comparability constant does not depend on $B, i_1, \ldots, i_{p(\underline i)}, n$.
For another $(\eta_1, \ldots, \eta_n) \in I^n$, take again for any $\underline i \in \Sigma_I^+$ the same indices $i_1, \ldots, i_{p(\underline i)}$ s.t $\phi_{i_1\ldots i_{p(\underline i)}}^{-1}B$ is a ball of radius $r_0$, thus,
\begin{equation}\label{Bx1}
\begin{aligned}
\hat\mu([i_{p(\underline i)}\ldots i_1\eta_1\ldots \eta_n]\times \phi_{i_1\ldots i_{p(\underline i)}}^{-1} B) &\approx \exp(S_{n+p(\underline i)}\psi(i_{p(\underline i)}\ldots i_1\eta_1\ldots \eta_n) - (n+p(\underline i)) P(\psi)) \\
&\approx \mu^+([\eta_1\ldots \eta_n]) \cdot \hat\mu([i_{p(\underline i)}\ldots i_1]\times \phi_{i_1\ldots i_{p(\underline i)}}^{-1}B),
\end{aligned}
\end{equation}
where the comparability constant does not depend on $B, i_1, \ldots, i_{p(\underline i)}, n$.
But the cover of $A$ with small balls  of type $B$ and the above process of inflating these balls along prehistories $\underline i$ to balls of radius $r_0$, can be done along any trajectories $\omega, \eta$. 
Thus by  (\ref{inve}) and using the uniform estimates (\ref{Bx0}), (\ref{Bx1}) and (\ref{muomega}), we obtain that there exists a constant $C>0$ such that for $\mu^+$-a.e $\omega, \eta \in \Sigma_I^+$, 
\begin{equation}\label{etao}
\frac 1C\mu_\eta(A) \le \mu_\omega(A) \le C\mu_\eta(A)
\end{equation}
From (\ref{etao}) and (\ref{desint}) for $\hat\mu$, $\exists$ a constant $C$ so that for any Borel sets $A_1\subset \Sigma_I^+, A_2\subset \Lambda$, 
 $$
\frac 1C \mu^+(A_1)\cdot\mu_\omega(A_2)\le \hat\mu(A_1\times A_2) \le C \mu^+(A_1) \cdot \mu_\omega(A_2)
$$
To finish the proof, recall that $\nu_2 = \pi_{2*}\hat\mu$, so $\nu_2(A) = \hat\mu(\Sigma_I^+\times A)$, and use the last estimates.

\end{proof}

\textit{Proof of Theorem \ref{thm1}.}

First, we prove the upper estimate for the pointwise dimension of $\nu_2$. For any $n \ge 1, (\omega, x) \in \Sigma_I^+\times \Lambda$, \ $\Phi^n(\omega, x) = (\sigma^n\omega, \phi_{\omega_n\ldots \omega_1}(x))$. From Birkhoff Ergodic Theorem applied to the $\Phi$-invariant measure $\hat\mu$, it follows that for $\hat\mu$-a.e $(\omega, x) \in \Sigma_I^+\times \Lambda$, 
$$\frac 1n \log|\phi_{\omega_n\ldots \omega_1}'|(x) \mathop{\longrightarrow}\limits_n \mathop{\sum}\limits_{i \in I} \int_{[i]\times \Lambda} \log|\phi_i'(x)| \ d\hat\mu(\omega, x) = \chi_s(\hat\mu)$$
On the other hand, from the Chain Rule for Jacobians, Birkhoff Ergodic Theorem and the formula for folding entropy $F_\Phi(\hat\mu)$, it follows that for $\hat\mu$-a.e $(\omega, x) \in \Sigma_I^+\times \Lambda$, 
$$\frac 1n \log J_{\Phi^n}(\hat\mu)(\omega, x) \mathop{\longrightarrow}\limits_n F_\Phi(\hat\mu)$$
 Thus for a set of $(\omega, x) \in\Sigma_I^+\times \Lambda$ of  full $\hat\mu$-measure,  
$$\frac 1n \log|\phi_{\omega_n\ldots \omega_1}'|(x) \mathop{\longrightarrow}\limits_n \mathop{\sum}\limits_{i \in I} \int_{[i]\times \Lambda} \log|\phi_i'(x)| \ d\hat\mu(\omega, x) = \chi_s(\hat\mu), \ \text{and} \  \ \frac 1n \log J_{\Phi^n}(\hat\mu)(\omega, x) \mathop{\longrightarrow}\limits_n F_\Phi(\hat\mu)$$
We now want to prove that the Jacobian $J_{\Phi^n}(\hat\mu)(\omega, x)$ depends basically  only on $\omega_1, \ldots, \omega_n$, i.e there exists a constant $C>0$ such that for every  $n \ge 1$, and $\hat\mu$-a.e  $(\eta, x)  \in [\omega_1\ldots \omega_n] \times \Lambda$, 
\begin{equation}\label{compjac}
\frac 1C J_{\Phi^n}(\hat\mu)(\eta, x) \le J_{\Phi^n}(\hat\mu)(\omega, x) \le C J_{\Phi^n}(\hat\mu)(\eta, x)
\end{equation}
In order to prove this, notice that if $r>0$, and $p>1$ is such that $diam[\omega_1\ldots \omega_{n+p}] = r$, then 
\begin{equation}\label{jacphin}
J_{\Phi^n}(\hat \mu)(\omega, x) = \mathop{\lim}\limits_{r\to 0, p \to \infty} \frac{\hat\mu(\Phi^n([\omega_1\ldots\omega_{n+p}]\times B(x, r))}{\hat\mu([\omega_1\ldots \omega_{n+p}]\times B(x, r))}
\end{equation}
 In our case, $\Phi^n([\omega_1\ldots \omega_{n+p}]\times B(x, r)) = [\omega_{n+1}\ldots \omega_{n+p}]\times \phi_{\omega_n\ldots\omega_1}B(x, r)$. If $\eta \in [\omega_1\ldots \omega_n]$,  
 $$\Phi^n([\eta_1\ldots \eta_{n+p}] \times B(x, r)) = [\eta_{n+1}\ldots \eta_{n+p}]\times \phi_{\omega_n \ldots \omega_1} B(x, r)$$
But from Lemma \ref{prodstr} there exists a constant $C>0$ such that for $\mu^+$-a.e $\omega \in \Sigma_I^+$, and any $n, p\ge 1$, 
\begin{equation}\label{comphat}
\begin{aligned}
\frac 1C \mu^+([\omega_{n+1}\ldots \omega_{n+p}])&\mu_\omega(\phi_{\omega_n\ldots\omega_1}B(x, r)) \le \hat \mu([\omega_{n+1}\ldots \omega_{n+p}]\times \phi_{\omega_n\ldots \omega_1}B(x, r)) \le \\
&\le C \mu^+([\omega_{n+1}\ldots \omega_{n+p}])\mu_\omega(\phi_{\omega_n\ldots \omega_1} B(x, r)),
\end{aligned}
\end{equation}
and similarly for $\hat\mu([\eta_{n+1}\ldots \eta_{n+p}]\times \phi_{\omega_n\ldots \omega_1} B(x, r))$.
Hence in view of (\ref{jacphin}) and (\ref{comphat}), we have only to compare the following quantities,
$$\frac{\mu^+([\omega_{n+1}\ldots\omega_{n+p}])\cdot \mu_\omega(\phi_{\omega_n\ldots \omega_1} B(x, r))}{\mu^+([\omega_1\ldots \omega_{n+p}])\cdot \mu_\omega(B(x, r))} \ \ \text{and} \ \ \frac{\mu^+([\eta_{n+1}\ldots \eta_{n+p}])\cdot\mu_\omega(\phi_{\omega_n\ldots\omega_1} B(x, r))}{\mu^+([\eta_1\ldots \eta_{n+p}]) \cdot \mu_\omega(B(x, r))}$$
However recall that $\eta \in [\omega_1\ldots \omega_n]$, thus there exists a constant $K>0$ such that 
\begin{equation}\label{holderS}
|S_n\psi(\eta_1\ldots \eta_n\ldots) - S_n\psi(\omega_1\ldots \omega_n\ldots)| \le K,
\end{equation}
since $\psi$ is H\"older continuous and $\sigma$ is expanding on $\Sigma_I^+$.
The same argument also implies that  \ $S_{n+p}\psi(\omega_1\ldots \omega_{n+p}\ldots)$ is determined in fact only by the first $n+p$ coordinates (modulo an additive constant).
Since $\mu^+$ is the equilibrium measure of $\psi$ on $\Sigma_I^+$, and thus  a Gibbs measure, we obtain: 
\begin{equation}\label{mu+o}
\begin{aligned}
&\frac{\mu^+([\omega_{n+1}\ldots \omega_{n+p}])}{\mu^+([\omega_1\ldots \omega_{n+p}])} \approx \frac{\exp(S_p\psi(\omega_{n+1}\ldots \omega_{n+p}\ldots) - pP(\psi))}{\exp(S_{n+p}\psi(\omega_1\ldots \omega_{n+p}) - (n+p)P(\psi))} \ \text{and},\\
&\frac{\mu^+([\eta_{n+1}\ldots \eta_{n+p}])}{\mu^+([\eta_1\ldots \eta_{n+p}])} \approx \frac{\exp(S_p\psi(\eta_{n+1}\ldots \eta_{n+p}\ldots) - pP(\psi))}{\exp(S_{n+p}\psi(\eta_1\ldots \eta_{n+p}) - (n+p)P(\psi))},
\end{aligned}
\end{equation}
 where the comparability constant does not depend on $n, p, \omega, \eta$. But we have: \ $S_{n+p}\psi(\omega_1\ldots \omega_{n+p}\ldots) = S_n\psi(\omega_1\ldots \omega_{n+p}\ldots) + S_p\psi(\omega_{n+1}\ldots \omega_{n+p}\ldots)$.
 And similary for $S_{n+p}\psi(\eta_1\ldots \eta_{n+p}\ldots)$.
Therefore, using (\ref{jacphin}), (\ref{comphat}), (\ref{holderS}) and (\ref{mu+o}), we obtain the Jacobians inequalites in (\ref{compjac}).

Let us take now, for any $n >1$ and $\vp>0$,  the Borel set in $\Sigma_I^+\times\Lambda$: 
$$
\begin{aligned}
A(n, \vp):= \{&(\omega, x)\in \Sigma_I^+ \times \Lambda, \ \big|\frac{\log|\phi_{\omega_n\ldots \omega_1}'(x)|}{n}-\chi_s(\hat \mu)\big| < \vp, \ |\frac{\log J_{\Phi^n}(\hat\mu)(\omega, x)}{n} - F_\Phi(\hat\mu)|<\vp,  \\ 
&\text{and} \ |\frac{S_n\psi(\omega)}{n}-\int\psi d\mu^+|<\vp\}
\end{aligned}
$$
Then from Birkhoff Ergodic Theorem, for any $\vp>0$, $\hat\mu(A(n, \vp)) \mathop{\longrightarrow}\limits_{n\to \infty} 1$. From (\ref{compjac}), if $(\omega, x) \in A(n, \vp)$ and $\eta \in [\omega_1\ldots \omega_n]$, then $(\eta, x) \in A(n, 2\vp)$, so for any $\delta>0$, 
$$
\mu^+(\{\omega \in \Sigma_I^+, \nu_2(\pi_2(A(n, \vp)\cap [\omega_1\ldots \omega_n]\times \Lambda))>1-\delta\}) \mathop{\longrightarrow}\limits_{n} 1$$
We have from (\ref{compjac}) that $[\omega_1\ldots\omega_n] \times \pi_2 A(n, \vp)\subset A(n, 2\vp)$. Thus from Lemma \ref{prodstr}, for $n > n(\delta)$, 
 \begin{equation}\label{hatmuA}
 \hat\mu(A(n, 2\vp)\cap [\omega_1\ldots \omega_n]\times \Lambda) > C(1-\delta)\cdot\hat\mu([\omega_1\ldots \omega_n]\times \Lambda)
 \end{equation}

Let now $r_n:= 2\vp|\phi_{\omega_n\ldots \omega_1}'(x)|$, \ for $(\omega, x) \in A(n, \vp) \cap [\omega_1\ldots \omega_n]\times \Lambda$. With $y = \phi_{\omega_n\ldots \omega_1}(x)$ we have $\nu_2(B(y, r_n)) \ge \nu_2(\phi_{\omega_n\ldots \omega_1}(\pi_2 (A(n, 2\vp) \cap [\omega_1\ldots \omega_n]\times \Lambda))$, and then since $\nu_2 = \pi_{2*}\hat \mu$, we obtain
$$\nu_2(B(y, r_n)) \ge \hat\mu(\Sigma_I^+\times \pi_2(\Phi^n(A(n, 2\vp)\cap [\omega_1\ldots \omega_n]\times \Lambda))) \ge \hat\mu(\Phi^n(A(n, \vp)\cap [\omega_1\ldots \omega_n]\times \Lambda))$$
But  $\Phi^n$ is injective on the cylinder $[\omega_1\ldots \omega_n]\times \Lambda$ since $\phi_j$ are injective. Thus we can apply the Jacobian formula for the measure of the $\Phi^n$-iterate in the last term of last inequality above,
\begin{equation}\label{injf}
\begin{aligned}
&\nu_2(B(y, r_n)) \ge \hat\mu(\Phi^n(A(n, \vp)\cap [\omega_1\ldots \omega_n]\times \Lambda)) = \int_{A(n, \vp)\cap [\omega_1\ldots \omega_n]\times \Lambda} J_{\Phi^n}(\hat\mu) \ d\hat\mu \\
&\ge \exp\big(n(F_\Phi(\hat\mu)- \vp)\cdot \hat\mu(A(n, \vp) \cap [\omega_1\ldots \omega_n]\times \Lambda\big)
\end{aligned}
\end{equation} 
Recall that $\mu^+([\omega_1\ldots \omega_n]) = \hat \mu([\omega_1\ldots \omega_n]\times \Lambda)$. Then, from (\ref{injf}) and (\ref{hatmuA}) we obtain:
\begin{equation}\label{lastpager}
\nu_2(B(y, r_n))\ge \exp(n(F_\Phi(\hat\mu)-\vp))(1-\delta)C \mu^+([\omega_1\ldots \omega_n]) \ge C(1-\delta) e^{n(F_\Phi(\hat\mu)-\vp)}\exp(S_n\psi(\omega) - nP(\psi)),
\end{equation}
where $C$ is independent of $n, \omega, x, y$, and $P(\psi):=P_\sigma(\psi)$. Since $\mu^+$ is the equilibrium state of $\psi$,  $$P(\psi) = h(\mu^+) + \int \psi \ d\mu^+$$
But from the definition of $A(n, \vp)$, for any $(\omega, x) \in A(n, \vp)$, we have
\begin{equation}\label{nchi}
e^{n(\chi_s(\hat\mu) +\vp)} \ge  r_n=2\vp|\phi_{\omega_n\ldots\omega_1}(x)| \ge e^{n(\chi_s(\hat\mu) - \vp)}, \  \text{thus}, \  
n (\chi_s(\hat\mu)+\vp) \ge \log r_n \ge n(\chi_s(\hat\mu) - \vp)
\end{equation}
From (\ref{lastpager}), (\ref{nchi}) and the above formula for pressure, we obtain that for $\nu_2$-a.e $y \in \Lambda$,
\begin{equation}\label{upperestd}
\overline{\delta}(\nu_2)(y) = \mathop{\overline{\lim}}\limits_{r\to 0}\frac{\log \nu_2(B(y, r))}{\log r} \le \frac{F_\Phi(\hat\mu)-h(\mu^+)}{\chi_s(\hat\mu)} = \frac{h(\mu^+)-F_\Phi(\hat\mu)}{|\chi_s(\hat\mu)|},
\end{equation}
which proves the upper estimate for the (upper) pointwise dimension of $\nu_2$.

\

Now, we prove the more difficult \textit{lower estimate} for the pointwise dimension of $\nu_2$.
\newline
Define for any $m \ge 1$ and $\vp>0$, the following Borel set in $\Sigma_I^+\times \Lambda$, 
$$
\begin{aligned}
\tilde A(m, \vp) := \big\{(\omega, x) \in \Sigma_I^+\times \Lambda, &\ |\frac 1n\log|\phi_{\omega_n\ldots \omega_1}'(x)|-\chi_s(\hat\mu)|<\vp, \text{and} \ |\frac 1n \log J_{\Phi^n}(\hat\mu)(\omega, x)- F_{\Phi^n}(\hat\mu)| < \vp, \\
& \text{and} \ |\frac 1n S_n\psi(\omega) - \int \psi d\mu^+| < \vp, \ \forall n \ge m\big\}
\end{aligned}
$$ 
We know from Birkhoff Ergodic Theorem that $\hat\mu(\tilde A(m, \vp)) \mathop{\to}\limits_m 1$, for any $\vp>0$. So we obtain $\nu_2(\pi_2\tilde A(m, \vp)) \mathop{\to}\limits_m 1$. But $\Phi^n([i_1\ldots i_n]\times \Lambda) = \Sigma_I^+\times \phi_{i_n\ldots i_1}\Lambda$, and from the $\Phi$-invariance of $\hat\mu$, we have $\hat\mu(\Phi^n([i_1\ldots i_n]\times \Lambda) \ge \hat\mu([i_1\ldots i_n] \times \Lambda)$. Moreover $\hat\mu([i_1\ldots i_n] \times \Lambda) >0$,  since $\hat\mu$ is the equilibrium measure of $\psi\circ \pi_1$ and $[i_1\ldots i_n]\times \Lambda$ is an open set. In conclusion,  
\begin{equation}\label{pozm}
\nu_2(\phi_{i_n\ldots i_1}\Lambda) = \hat\mu(\Sigma_I^+\times \phi_{i_n\ldots i_1}\Lambda) = \hat\mu(\Phi^n([i_1\ldots i_n]\times \Lambda) \ge \hat\mu([i_1\ldots i_n] \times \Lambda) > 0
\end{equation}

We present briefly the general strategy of the proof, which will be detailed in the sequel. Since $\Phi^m([i_1\ldots i_m]\times \Lambda) = \Sigma_I^+\times \phi_{i_m\dots i_1}\Lambda$, and $\nu_2 = \pi_{2*}\hat\mu$, we have: $$\nu_2(\phi_{i_m\ldots i_1}\Lambda) = \hat\mu(\Sigma_I^+\times \phi_{i_m\ldots i_1}\Lambda) = \hat\mu(\Phi^m([i_1\ldots i_m]\times \Lambda)$$ 
Notice that a small ball $B(x, r)$ can intersect many sets of type $\phi_{i_m\ldots i_1}\Lambda$, for various $m$-tuples $(i_1, \ldots, i_m) \in I^m$, and these image sets may also intersect one another. Thus when estimating $\nu_2(B(x, r))$, all of these sets must be considered; it is not enough in principle to consider only one intersection $B(x, r) \cap \phi_{i_m\ldots i_1}\Lambda$.  \ However, we know from (\ref{pozm}) that $\nu_2(\phi_{i_m\ldots i_1}\Lambda) >0$, thus from the Borel Density Theorem (see \cite{Pe}), it follows that for $\nu_2$-a.e $x \in \phi_{i_m\ldots i_1}\Lambda$, and for all $0< r < r(x)$, $$\frac{\nu_2(B(x, r)\cap \phi_{i_m\ldots i_1}\Lambda)}{\nu_2(B(x, r))} > 1/2$$
 Hence, from the point of view of the measure $\nu_2$, the intersection $B(x, r) \cap \phi_{i_m\ldots i_1}\Lambda$ contains at least half of the $\nu_2$-measure of the ball $B(x, r)$. This hints that it is enough to consider only one good image set of type $\phi_{i_m\ldots i_1}\Lambda$. Then  since $\nu_2(\pi_2\tilde A(m, \vp)) \mathop{\to}\limits_m 1$, we can consider only $\nu_2(\phi_{i_m\ldots i_1}(\pi_2\tilde A(m, \vp)))$, which can be estimated using the Jacobian $J_{\Phi^m}(\hat\mu)$ and the genericity of points in $\tilde A(m, \vp)$ with respect to  $\log J_{\Phi^m}(\hat\mu)$ and $\log|\phi_{i_m\ldots i_1}'|$. Then we  repeat this argument whenever the iterate of a point belongs to the set of generic points $\tilde A(m, \vp)$. However not all iterates of a point belong to this set, but it will be shown by a delicate estimate that ''most'' of  them hit $\tilde A(m, \vp)$. For the iterates not in $\tilde A(m, \vp)$, we use a different type of estimate. Then we repeat and combine these two  estimates, by an interlacing procedure. \ 
We now proceed with the full proof:

For any integer $m>1$, consider the Borel set $\tilde A(m, \vp)$ defined above. Then for any $\alpha>0$ arbitrarily small, there exists an integer $m(\alpha)>1$ such that for any $m > m(\alpha)$,  we have:
\begin{equation}\label{Amalpha}
\hat\mu(\tilde A(m, \vp)) > 1-\alpha
\end{equation}
Let us fix such an integer $m > m(\alpha)$. Then from Birkhoff Ergodic Theorem applied to $\Phi^m$ and $\chi_{\tilde A(m, \vp)}$, we have that for $\hat\mu$-a.e $(\omega', x')\in \Sigma_I^+\times \Lambda$, 
$$\frac 1n Card\{0 \le k \le n, \ \Phi^{km}(\omega', x') \in \tilde A(m, \vp)\} \mathop{\longrightarrow}\limits_{n \to \infty} \hat\mu(\tilde A(m, \vp))$$
Hence, there exists an integer $n(\alpha)$ and a Borel set $D(\alpha)\subset \Sigma_I^+\times \Lambda$, with $\hat\mu(D(\alpha)) > 1-\alpha$, such that for $(\omega', x') \in D(\alpha)$ and $n \ge n(\alpha)$, we have:
\begin{equation}\label{Dalpha}
\frac 1n Card\{0 \le k \le n, \ \Phi^{mk}(\omega', x') \in \tilde A(m, \vp)\} > 1-2\alpha
\end{equation}
In other words a large proportion of the iterates of points $(\omega', x')$ in $D(\alpha)$, belong to the set of generic points $\tilde A(m, \vp)$.
So in the $\Phi^m$-trajectory $(\omega', x'), \Phi^m(\omega', x'), \ldots, \Phi^{nm}(\omega', x')$, there are at least $(1-2\alpha)n$ iterates in $\tilde A(m, \vp)$.  \

 For arbitrary indices $i_1, \ldots  i_m \in I$, let us define now the Borel set in $\Lambda$, $$Y(i_1, \ldots, i_m):= \phi_{i_1 \ldots i_m}\pi_2(\tilde A(m, \vp))$$
Consider first the intersection of all these sets, namely $\mathop{\bigcap}\limits_{i_1, \ldots, i_m \in I} Y(i_1, \ldots, i_m)$. Then take the intersections of these sets except only one of them, so consider the sets of type 

$\mathop{\bigcap}\limits_{(j_1, \ldots, j_m) \in I^m \setminus  \{(i_1, \ldots, i_m)\}} Y(j_1, \ldots, j_m) \setminus Y(i_1, \ldots, i_m),$ for all $(i_1, \ldots, i_m) \in I^m$. Then consider the intersections of all the sets $Y(j_1, \ldots, j_m)$ excepting two of them, namely the intersections of type $\mathop{\bigcap}\limits_{(j_1, \ldots, j_m) \in I^m \setminus \{(i_1, \ldots, i_m),  (i_1', \ldots, i_m')\}} Y(j_1, \ldots, j_m) \setminus \big(Y(i_1, \ldots, i_m) \cup Y(i_1', \ldots, i_m')\big)$, for all the $m$-tuples $(i_1, \ldots, i_m),$ $(i_1', \ldots, i_m')\in I^m$. We continue this procedure until we exhaust all the possible intersections of type 
$$\mathop{\bigcap}\limits_{(j_1, \ldots, j_m) \in I^m \setminus \mathcal J} Y(j_1, \ldots, j_m) \setminus \mathop{\bigcup}\limits_{(i_1, \ldots, i_m) \in \mathcal J} Y(i_1, \ldots, i_m),$$ for some arbitrary given set $\mathcal J$ of $m$-tuples  from $I^m$.  Notice that in this way, by taking all the subsets $\mathcal J \subset I^m$, we obtain by the above procedure mutually disjoint Borel sets (some may be empty).  Denote these  mutually disjoint nonempty sets obtained above by $Z_1(\alpha; m, \vp), \ldots, Z_{M(m)}(\alpha; m, \vp)$. 

Now if for some $1\le i \le M(m)$, we know that $\nu_2(Z_i(\alpha; m, \vp)) >0$, then from the Borel Density Theorem (see \cite{Pe}),  there exists a Borel subset $G_i(\alpha; m, \vp) \subset Z_i(\alpha; m, \vp)$, with $$\nu_2(G_i(\alpha; m, \vp)) \ge \nu_2(Z_i(\alpha; m, \vp))(1 - \alpha),$$ and  there exists $r_i(\alpha; m, \vp)>0$, such that for any $x \in G_i(\alpha; m, \vp)$ and for any $0 <r <r_i(\alpha; m, \vp)$, 
\begin{equation}\label{Gi}
\frac{\nu_2(B(x, r) \cap Z_i(\alpha; m, \vp))}{\nu_2(B(x, r))} > 1-\alpha.
\end{equation}

Now define the Borel subset of $\Lambda$, 
$$G(\alpha; m, \vp):= \mathop{\bigcup}\limits_{i=1}^{M(m)} G_i(\alpha; m, \vp)$$
From the construction of the mutually disjoint sets $Z_i(\alpha; m, \vp)$, it follows that,
\begin{equation}\label{cupZ}
\mathop{\sum}\limits_{1 \le i \le M(m)} \nu_2(Z_i(\alpha; m, \vp)) = \nu_2(\mathop{\cup}\limits_{i_1, \ldots, i_m \in I} Y(i_1, \ldots, i_m))
\end{equation}
But from definition of $Y(i_1, \ldots, i_m)$ and the disjointness of different $m$-cylinders,  we have that,
$$
\begin{aligned}
\mathop{\cup}\limits_{i_1, \ldots, i_m \in I} Y(i_1, \ldots, i_m) & = \mathop{\cup}\limits_{i_1, \ldots, i_m \in I} \pi_2(\Phi^m([i_m\ldots i_1] \times \pi_2(\tilde A(m, \vp)))) =\\
&=  \pi_2(\mathop{\cup}\limits_{i_1, \ldots, i_m \in I} \Phi^m([i_m\ldots i_1]\times \pi_2\tilde A(m, \vp))) = \pi_2(\Phi^m(\Sigma_I^+ \times \pi_2\tilde A(m, \vp)))
\end{aligned}
$$
However since $\nu_2 = \pi_{2*}\hat\mu$, and using the $\Phi$-invariance of $\hat \mu$ and (\ref{Amalpha}), it follows that: $$
\begin{aligned}
\nu_2\big( \pi_2(\Phi^m(\Sigma_I^+ \times \pi_2\tilde A(m, \vp)))\big) &= \hat \mu(\Sigma_I^+ \times  \pi_2(\Phi^m(\Sigma_I^+ \times \pi_2\tilde A(m, \vp)))) \ge \hat\mu(\Phi^m(\Sigma_I^+ \times \pi_2\tilde A(m, \vp))) \ge \\ &\ge \hat\mu(\Sigma_I^+ \times \pi_2\tilde A(m, \vp))  \ge \hat \mu(\tilde A(m, \vp)) > 1-\alpha
\end{aligned}
$$
Thus from the last two displayed formulas, it follows that:
\begin{equation}\label{tildemare}
\nu_2(\mathop{\bigcup}\limits_{i_1, \ldots, i_m \in I} Y(i_1, \ldots, i_m)) > 1-\alpha
\end{equation}
Hence from (\ref{cupZ}) and (\ref{tildemare}), we obtain:
\begin{equation}\label{MmZ}
\nu_2\big(\mathop{\bigcup}\limits_{1 \le i \le M(m)} Z_i(\alpha; m, \vp) \big) > 1-\alpha
\end{equation}
But the Borel sets $Z_i(\alpha; m, \vp), 1 \le i \le M(m)$ are mutually disjoint, and from (\ref{Gi}) we know that $G_i(\alpha; m, \vp) \subset Z_i(\alpha; m, \vp)$ and that, for $1 \le i \le M(m)$, $$\nu_2(G_i(\alpha; m, \vp)) \ge  \nu_2(Z_i(\alpha; m, \vp)) (1-\alpha)$$  Hence from the definition of $G(\alpha; m, \vp) = \mathop{\bigcup}\limits_{1\le i \le M(m)} G_i(\alpha; m, \vp)$ and from (\ref{MmZ}),
\begin{equation}\label{nu2Ga}
 \nu_2(G(\alpha; m, \vp)) > (1-\alpha)^2> 1-2\alpha
 \end{equation}
 Denote now the following intersection set by,
$$X(\alpha; m, \vp):= G(\alpha; m, \vp) \cap \pi_2\tilde A(m, \vp)$$
Then from the above estimate for $\nu_2(G(\alpha; m, \vp))$ and by using (\ref{Amalpha}), we obtain:
\begin{equation}\label{nu2X}
\nu_2(X(\alpha; m, \vp)) > 1-3\alpha
\end{equation}

Now by applying the same argument as in (\ref{Dalpha}) to the set $\tilde A(m, \vp) \cap \big(\Sigma_I^+\times X(\alpha; m, \vp)\big)$, we obtain that there exists a Borel set $\tilde D(\alpha; m, \vp) \subset \tilde A(m, \vp) \subset \Sigma_I^+\times \Lambda$, with 
\begin{equation}\label{Dam}
\hat\mu(\tilde D(\alpha; m, \vp)) > 1- 4\alpha,
\end{equation}
 and such that for any pair $(\ul i, x') \in \tilde D(\alpha; m, \vp)$, at least a number of  $(1-3\alpha)n$ of the points $\pi_2(\ul i, x')$, $\pi_2\Phi^m(\ul i, x'), \ldots, \pi_2\Phi^{nm}(\ul i, x')$ belong to  $X(\alpha; m, \vp)$. Moreover any point $\zeta \in X(\alpha; m, \vp)$ satisfies condition (\ref{Gi}) for all  $0 < r < r_i(\alpha; m, \vp)$ if $\zeta \in Z_i(\alpha; m, \vp)$, for some $1 \le i \le M(m)$. So denote $$r_m(\alpha, \vp) := \mathop{\min}\limits_{1\le i \le M(m)} r_i(\alpha; m, \vp)$$

Consider now $(\ul i, x') \in \tilde D_m(\alpha, \vp)$, and denote by $x = \phi_{i_{nm}\ldots i_1}(x') = \pi_2\Phi^{nm}(\ul i, x')$. So we have the following backward trajectory of $x$ with respect to $\Phi^m$ determined by the sequence $\ul i$ from above,  
\begin{equation}\label{seqx}
x, \phi_{i_{(n-1)m}\ldots i_1}(x'), \ldots, \phi_{i_m\ldots i_1}(x'), x',
\end{equation}
 and denote these points respectively by $x, x_{-m}, \ldots, x_{-(n-1)m}, x_{-nm} = x'$.  
To see the next argument,  assume for simplicity that the first preimage of $x$ in this trajectory, namely $x_{-m} = \phi_{i_{(n-1)m}\ldots i_1}(x')$ belongs to $X(\alpha; m, \vp)$. Then from (\ref{Gi}), Lemma \ref{prodstr}, and the genericity of $J_{\Phi^m}$ on $\tilde A(m, \vp)$,
$$
\begin{aligned}
\nu_2(B(x, r)) &\le \frac{1}{1-\alpha} \nu_2(B(x, r) \cap \phi_{i_{nm}\ldots i_{(n-1)m}}\pi_2\tilde A(m, \vp))  = \\
& = \frac{1}{1-\alpha} \hat\mu(\Sigma_I^+\times (B(x, r) \cap \phi_{i_{nm}\ldots i_{(n-1)m}}\pi_2\tilde A(m, \vp))) \le  \\
& \le \frac{1}{1-\alpha} \hat\mu(\Phi^m([i_{(n-1)m}\ldots i_{nm}]\times (\pi_2\tilde A(m, \vp)\cap  (\phi_{i_{nm}\ldots i_{(n-1)m}})^{-1}B(x, r) ))) = \\ 
&= \frac{1}{1-\alpha}\int_{[i_{(n-1)m}\ldots i_{nm}]\times (\pi_2\tilde A(m, \vp)\cap   (\phi_{i_{nm}\ldots i_{(n-1)m}})^{-1}B(x, r)  )} J_{\Phi^m}(\hat\mu) \ d\hat\mu \le \\
&\le C\frac{1}{1-\alpha} e^{m(F_\Phi(\hat\mu)+\vp)}\mu^+([i_{(n-1)m \ldots i_{nm}}])\cdot \nu_2( (\phi_{i_{nm}\ldots i_{(n-1)m}})^{-1}B(x, r) ) \le \\
&\le \frac{C}{1-\alpha} e^{m(F_\Phi(\hat\mu)+2\vp-h(\mu^+))}\nu_2( (\phi_{i_{nm}\ldots i_{(n-1)m}})^{-1}B(x, r)),
\end{aligned}
$$
where the last inequality follows since $\mu^+$ is the equilibrium state of $\psi$ and $P(\psi) = h_{\mu^+} + \int \psi d\mu^+$.
Thus we obtain from above the following estimate on the measure of $B(x, r)$,
\begin{equation}\label{nu2B}
\nu_2(B(x, r)) \le C \frac{1}{1-\alpha} \cdot e^{m(F_\Phi(\hat\mu)-h(\mu^+)+2\vp)}\cdot \nu_2( (\phi_{i_{nm}\ldots i_{(n-1)m}})^{-1}B(x, r) )
\end{equation}
This argument can be repeated until we reach in the above backward trajectory of $x$ (\ref{seqx}), a preimage  which is \textit{not} in $X(\alpha; m, \vp)$.  Denote then by $k_1\ge 1$ the first integer $k$ for which $x_{-mk} \notin X(\alpha; m, \vp)$, and assume that the above process is interrupted for $k_1'$ indices, namely $\{x_{-mk_1}, x_{-m(k_1+1)}, \ldots, x_{-m(k_1+k_1'-1)}\} \cap X(\alpha; m, \vp) = \emptyset$.  Then denote $y:= x_{-mk_1}$, and let us estimate $\nu_2( (\phi_{i_{nm}\ldots i_{(n-k_1)m}})^{-1}B(x, r) )$.
By definition of the push-forward measure $\nu_2$, 
$$\nu_2( (\phi_{i_{nm}\ldots i_{(n-k_1)m}})^{-1}B(x, r) ) = \hat\mu(\Sigma_I^+\times  (\phi_{i_{nm}\ldots i_{(n-k_1)m}})^{-1}B(x, r) )$$
 By definition of $k_1, k_1'$, we have that $x_{-m(k_1+k_1')} \in X(\alpha; m, \vp)$.
By repeating the estimate in (\ref{nu2B}), we obtain an upper estimate for $\nu_2(B(x, r))$, 
\begin{equation}\label{repeatnu2}
 \nu_2(B(x, r)) \le ( \frac{C}{1-\alpha})^{k_1} \cdot e^{mk_1(F_\Phi(\hat\mu)-h(\mu^+)+2\vp)}\cdot \nu_2( (\phi_{i_{nm}\ldots i_{(n-k_1)m}})^{-1}B(x, r) )
 \end{equation}
Now on the other hand from the $\Phi$-invariance of $\hat\mu$ and  the definition of $\nu_2$,
\begin{equation}\label{Byr1}
\begin{aligned}
\nu_2( (\phi_{i_{nm}\ldots i_{(n-k_1)m}})^{-1} B(x, r) ) &= \hat\mu(\Sigma_I^+\times  (\phi_{i_{nm}\ldots i_{(n-k_1)m}})^{-1}B(x, r) ) = \\
&= \hat\mu(\phi^{-mk_1'}(\Sigma_I^+\times  (\phi_{i_{nm}\ldots i_{(n-k_1)m}})^{-1}B(x, r) ))\\
&= \mathop{\sum}\limits_{j_p\in I, 1\le p \le mk_1'} \hat\mu([j_1\ldots j_{mk_1'}]\times \phi_{j_{mk_1'}\ldots j_1}^{-1}  (\phi_{i_{nm}\ldots i_{(n-k_1)m}})^{-1}B(x, r)  )
\end{aligned}
\end{equation}
Recall that the set of non-generic points satisfies, $$\hat\mu((\Sigma_I^+\times \Lambda ) \setminus \tilde A(m, \vp)) < \alpha$$

We now compare the $\hat\mu$-measure of the set of generic points with respect to the $\hat\mu$-measure, with the $\hat\mu$-measure of the set of non-generic points. 
There are 2 cases: 

\ a)  \ If, $$\hat\mu\big(\tilde A(m, \vp) \bigcap \Phi^{-mk_1'}(\Sigma_I^+\times  (\phi_{i_{nm}\ldots i_{(n-k_1)m}})^{-1}B(x, r))\big) < \frac 12 \hat\mu(\Phi^{-mk_1'}(\Sigma_I^+\times  (\phi_{i_{nm}\ldots i_{(n-k_1)m}})^{-1}B(x, r) )),$$ then non-generic points have more mass than the generic points, hence,
$$
\nu_2( (\phi_{i_{nm}\ldots i_{(n-k_1)m}})^{-1}B(x, r) ) = \hat\mu(\Sigma_I^+ \times  (\phi_{i_{nm}\ldots i_{(n-k_1)m}})^{-1}B(x, r) ) < 2\alpha <<1
$$
By collecting all sets with the above property and taking $\alpha\to 0$, this case is then straightforward.

\ \  b) \  If, \ $$\hat\mu\big(\tilde A(m, \vp) \cap \Phi^{-mk_1'}(\Sigma_I^+\times  (\phi_{i_{nm}\ldots i_{(n-k_1)m}})^{-1}B(x, r) )\big) \ge \frac 12 \hat \mu(\Phi^{-mk_1'}(\Sigma_I^+\times  (\phi_{i_{nm}\ldots i_{(n-k_1)m}})^{-1}B(x, r) )),$$ then using also (\ref{Byr1}) we obtain:
\begin{equation}\label{genericmu2}
\begin{aligned}
\nu_2( (\phi_{i_{nm}\ldots i_{(n-k_1)m}})^{-1}B(x, r) ) &= \hat\mu(\Sigma_I^+\times  (\phi_{i_{nm}\ldots i_{(n-k_1)m}})^{-1}B(x, r) ) = \\ &= \hat\mu(\Phi^{-mk_1'}(\Sigma_I^+\times  (\phi_{i_{nm}\ldots i_{(n-k_1)m}})^{-1}B(x, r) )) \\
& \le 2\mathop{\sum}\limits_{\ul j \ \text{generic}}\hat\mu\big(\tilde A(m, \vp)\cap \big([j_1\ldots j_{mk_1'}] \times \phi_{j_{mk_1'}\ldots j_1}^{-1}  (\phi_{i_{nm}\ldots i_{(n-k_1)m}})^{-1}B(x, r)\big) \big)
\end{aligned}
\end{equation}
But for two generic histories $\ul j, \ul j'$, i.e. for $\ul j, \ul j' \in \pi_1(\tilde A(m, \vp))$, we know that for any 
 \ $(\omega, z) \in \tilde A(m, \vp) \bigcap \big( [j_1\ldots j_{mk_1'}]\times \phi_{j_{mk_1'}\ldots j_1}^{-1} (\phi_{i_{nm}\ldots i_{(n-k_1)m}})^{-1}B(x, r) $ $\bigcup \ [j_1'\ldots j_{mk_1'}']\times \phi_{j_{mk_1'}'\ldots j_1'}^{-1} (\phi_{i_{nm}\ldots i_{(n-k_1)m}})^{-1}B(x, r) \big)$,
$$J_{\Phi^{mk_1'}}(\hat\mu)(\omega, z) \  \in  \ \big(e^{mk_1'(F_\Phi(\hat\mu) -\vp)}, \ e^{mk_1'(F_\Phi(\hat\mu) +\vp)}\big).$$  Hence since $\Phi^{mk_1'}(\tilde A(m, \vp) \ \bigcap \ [j_1\ldots j_{mk_1'}]\times \phi_{j_{mk_1'}\ldots j_1}^{-1} (\phi_{i_{nm}\ldots i_{(n-k_1)m}})^{-1}B(x, r) \subset \phi_{i_{nm}\ldots i_{(n-k_1)m}})^{-1}B(x, r)$ and using the above estimate on the Jacobian of $\hat\mu$ with respect to $\Phi^{mk_1'}$, it follows that    there exists a constant factor $C>1$ such that the ratio of the $\hat\mu$-measures of the two preimage type sets   $$ \tilde A(m, \vp) \ \bigcap \ [j_1\ldots j_{mk_1'}]\times \phi_{j_{mk_1'}\ldots j_1}^{-1} (\phi_{i_{nm}\ldots i_{(n-k_1)m}})^{-1}B(x, r),  \ \text{and}, $$  $$ \tilde A(m, \vp) \ \bigcap \ [j_1'\ldots j'_{mk_1'}]\times \phi_{j'_{mk_1'}\ldots j'_1}^{-1} (\phi_{i_{nm}\ldots i_{(n-k_1)m}})^{-1}B(x, r),$$ corresponding to generic $\ul j, \ul j'$, \ belongs to the interval $\big(C^{-1} e^{-mk_1' \vp}, C e^{mk_1' \vp}\big)$. 
\newline
Notice also that there are at most $d^{mk_1'}$  sets of type $ [j_1\ldots j_{mk_1'}]\times \phi_{j_{mk_1'}\ldots j_1}^{-1} (\phi_{i_{nm}\ldots i_{(n-k_1)m}})^{-1}B(x, r)$, where $d:= |I|$. The maximality assumption for $k_1'$ implies that $\phi_{i_{m(n-k_1)}\ldots i_{m(n-k_1-k_1')}}^{-1}(y) \in X(\alpha; m, \vp)$, where $y:= x_{-mk_1}$. 
Thus using the above discussion and (\ref{genericmu2}), we obtain:
\begin{equation}\label{modphinu}
\nu_2((\phi_{i_{nm}\ldots i_{(n-k_1)m}})^{-1}B(x, r)) \le Cd^{mk_1'(1+\frac{2\vp}{\log d})}\cdot \nu_2(\phi_{i_{m(n-k_1-1)}\ldots i_{m(n-k_1-k_1')}}^{-1} (\phi_{i_{nm}\ldots i_{(n-k_1)m}})^{-1}B(x, r))
\end{equation}

Now from above, $\phi_{i_{m(n-k_1)}\ldots i_{m(n-k_1-k_1')}}^{-1}(y) \in X(\alpha; m, \vp)$, and  we repeat the argument from (\ref{repeatnu2}) along the sequence $\ul i$ until reaching another preimage of $x$ which does not belong to $X(\alpha; m, \vp)$; next, we apply again the argument from (\ref{modphinu}), and so on, until reaching the $nm$-preimage of $x$, namely $x_{-nm}= \phi_{i_{nm}\ldots i_1}^{-1}(x)$. Recall from (\ref{seqx}) that  $x' := x_{-nm}$. Consider now $r_0>0$ a fixed radius, and $n= n(r)$ be chosen so that, due to the conformality of $\phi_i, i \in I$, 
\begin{equation}\label{zeta0}
\phi^{-1}_{i_{nm}\ldots i_1}B(x, r) = B(x_{-nm}, r_0) = B(x', r_0)
\end{equation} 
On the backward $m$-trajectory (\ref{seqx}) of $x$ determined by $\ul i$ above, recall that we denoted by $k_1'$ the length of the first maximum ``gap'' consisting of preimages of $x$ which are not in $X(\alpha; m, \vp)$. Now let us denote in general the lengths of such maximal ``gaps'' in this trajectory (\ref{seqx}), consisting of consecutive preimages which do not belong to $X(\alpha; m, \vp)$, by  $k_1', k_2', \ldots$.  
\ \ More precisely, we have $x_{-jm} \in X(\alpha; m, \vp), 0\le j \le k_1-1$, followed by  $x_{-mk_1}, \ldots, x_{-m(k_1+k_1'-1)} \notin X_m$; then $x_{-m(k_1+k_1')}, \ldots, x_{-m(k_1+k_1'+k_2-1)} \in X_{m}(\alpha, \vp)$, followed by  $x_{-m(k_1+k_1'+k_2)}, \ldots, x_{-m(k_1+k_1'+k_2+k_2'-1)} \notin X(\alpha; m, \vp)$; then,  
\newline
$x_{-m(k_1+k_1'+k_2+k_2')}, \ldots, x_{-m(k_1+k_1'+k_2+k_2'+k_3-1)}$ $\in X(\alpha; m, \vp)$, and so on.  
\newline
Denote by $k_1, \ldots, k_{p(n)}$ and $k_1', \ldots, k_{p'(n)}'$ the integers obtained by this procedure, corresponding to  the sequence (\ref{seqx}). Clearly, $$k_1+k_1' + \ldots + k_{p(n)} + k_{p(n)}' = n$$ Also $p'(n)$ is equal either to $p(n)$ or to $p(n) -1$.
Then from the properties of $\tilde D_m(\alpha, \vp)$ in (\ref{Dam}), 
\begin{equation}\label{gapsbound}
k_1 + \ldots + k_{p(n)} \ge n(1-3\alpha), \ \text{and} \ \  k_1'+k_2'+\ldots + k_{p'(n)}'\le 3\alpha n
\end{equation}
We  apply (\ref{repeatnu2}) for the generic preimages $x, \ldots, x_{-m(k_1-1)}$ from the trajectory (\ref{seqx}), then  apply the estimate in (\ref{modphinu}) for the nongeneric preimages $x_{-mk_1}, \ldots, x_{-m(k_1+k_1'-1)}$, then again apply (\ref{repeatnu2}) for $x_{-m(k_1+k_1')}, \ldots, x_{-m(k_1+k_1'+k_2-1)}$, followed by (\ref{modphinu}) for $x_{-m(k_1+k_1'+k_2)}, \ldots, x_{-m(k_1+k_1'+k_2+k_2'-1)}$, and so on. 
Hence applying succesively the estimates  (\ref{repeatnu2}) and (\ref{modphinu}), and recalling (\ref{zeta0}) and the bound on the combined length of gaps $k_1'+k_2'+\ldots + k_{p'(n)}'\le 3 \alpha n$ from (\ref{gapsbound}), we obtain:
\begin{equation}\label{combest}
\nu_2(B(x, r)) \le (\frac{C}{1-\alpha})^{n}\cdot d^{3\alpha mn(1+\frac{\vp}{\log d})}\cdot e^{(F_\Phi(\hat \mu) - h(\mu^+) +2\vp) \cdot mn(1-3\alpha)} \nu_2(B(x', r_0)).
\end{equation}
But  $\chi_s(\hat\mu)$ is the stable Lyapunov exponent of $\hat\mu$, i.e $\chi_s(\hat\mu) = \int_{\Sigma_I^+\times \Lambda} \log |\phi_{\omega_1}'(x)| \ d\hat\mu(\omega, x)$. From (\ref{zeta0}) it follows  that $r = r_n= r_0|\phi_{i_{nm}\ldots i_1}'(x')|$, and recall that $(\ul i, x') \in \tilde D_m(\alpha, \vp)$. Therefore, 
\begin{equation}\label{rlyap}
e^{nm(\chi_s(\hat\mu)-\vp)} \le r_n \le e^{nm(\chi_s(\hat\mu) +\vp)}.
\end{equation}
Denote by $\tilde X(\alpha; m, \vp):= \pi_2 \tilde D_m(\alpha, \vp)$. Then since $\nu_2:= \pi_{2*}\hat\mu$ and $\hat\mu(\tilde D_m(\alpha, \vp)) > 1-4\alpha$, we obtain,
$$\nu_2(\tilde X(\alpha; m, \vp)) = \hat\mu(\Sigma_I^+\times \tilde X(\alpha; m, \vp)) \ge \hat \mu(\tilde D_m(\alpha, \vp)) \ge 1-4\alpha$$
From (\ref{combest}) and (\ref{rlyap}) it follows that by taking  $C' = 2C>1$, then for every $x \in \tilde X(\alpha; m, \vp)$ and for a sequence $r_n\to 0$, the following estimate holds: 
$$
\nu_2(B(x, r_n)) \le (\frac{C'}{1-\alpha})^{n}\cdot d^{3\alpha mn (1+\frac{2\vp}{\log d})} \cdot r_n^{(1-3\alpha)\cdot \frac{F_\Phi(\hat\mu)-h(\mu^+)+2\vp}{\chi_s(\hat\mu)}}.
$$
Thus,
\begin{equation}\label{Thus14}
\log \nu_2(B(x, r_n)) \le (1-3\alpha) \log r_n\cdot \frac{F_\Phi(\hat\mu)-h(\mu^+)+2\vp}{\chi_s(\hat\mu)} + n \log \frac{C'}{1-\alpha} + 3\alpha mn \cdot \log d (1+\frac{2\vp}{\log d}).
\end{equation}
But by using (\ref{rlyap}) and dividing in (\ref{Thus14}) by $\log r_n$, one obtains:
\begin{equation}\label{nu2r}
\frac{\log \nu_2(B(x, r_n))}{\log r_n} \ge (1-3\alpha) \frac{F_\Phi(\hat\mu)-h(\mu^+)+2\vp}{\chi_s(\hat\mu)} + \frac{\log\frac{C'}{1-\alpha}}{m(\chi_s(\hat\mu)+\vp)} + (1+\frac{2\vp}{\log d}) \frac{3 \alpha \log d}{\chi_s(\hat\mu) + \vp}.
\end{equation}
Now let us take some arbitrary radius $\rho>0$, and assume that for some integer $n$, we have $r_{n+1}\le \rho \le r_n$, where $r_n$ is defined at (\ref{rlyap}). Then $\nu_2(B(x, \rho)) \le \nu_2(B(x, r_n))$, therefore $$\frac{\log\nu_2(B(x, \rho))}{\log \rho} \ge \frac{\log \nu_2(B(x, r_n))}{\log \rho}$$ But $\log \rho \ge \log r_{n+1}$, hence $\frac{1}{\log \rho} \le \frac{1}{\log r_{n+1}} < 0$, hence from above,
\begin{equation}\label{rhon}
\frac{\log\nu_2(B(x, \rho))}{\log \rho} \ge \frac{\log\nu_2(B(x, r_n))}{\log r_{n+1}} \ge \frac{\log \nu_2(B(x, r_n))}{c+\log r_n},
\end{equation}
since $r_{n+1} > cr_n$, for some constant $c$ independent of $n$.
Thus by letting $\rho \to 0$, and using (\ref{nu2r}) and (\ref{rhon}), we obtain the following lower estimate for the lower pointwise dimension of $\nu_2$:
$$\underline{\delta}(\nu_2)(x) \ge (1-3\alpha) \frac{F_\Phi(\hat\mu)-h(\mu^+)+2\vp}{\chi_s(\hat\mu)} + \frac{\log\frac{C'}{1-\alpha}}{m(\chi_s(\hat\mu)+\vp)} + 3\alpha(1+\frac{2\vp}{\log d}) \frac{\log d}{\chi_s(\hat\mu) + \vp}
$$
But  $\alpha \to 0$ as $m \to \infty$, and $\nu_2(\tilde X(\alpha; m, \vp)) \to 1$. 
So from last displayed estimate, for $\nu_2$-a.e $x \in \Lambda$, 
$$\underline{\delta}(\nu_2)(x) \ge  \frac{F_\Phi(\hat\mu)-h(\mu^+)+2\vp}{\chi_s(\hat\mu)} $$
Since $\vp$ is arbitrary, it follows from the above lower estimate and  (\ref{upperestd}) that, for $\nu_2$-a.e $x \in \Lambda$, 
$$\delta(\nu_2)(x) =  \frac{F_\Phi(\hat\mu)-h(\mu^+)}{\chi_s(\hat\mu)}$$
From (\ref{oS}), $F_\Phi(\hat\mu)) = \log o(\mathcal S, \hat \mu)$, so using the last formula we conclude the proof of Theorem \ref{thm1}.

$\hfill\square$



\textit{Proof of  Theorem \ref{bernou}.} 

From the Birkhoff Ergodic Theorem applied to the continuous potential $\kappa(\omega, x) = \log|\phi'_{\omega_1}(x)|$ on $\Sigma_I^+ \times \Lambda$, we see that $\chi_s(\hat\mu_{\bf p}) = \mathop{\lim}\limits_{n \to \infty} \frac{\log|\phi'_{\omega_n\ldots\omega_1}(x)|}{n}$, for $\hat\mu_{\bf p}$-a.e $(\omega, x)$. But from the Bounded Distortion Property, it does not matter which $x$ we take in the limit above. Hence using that $\mu_{\bf p } = \pi_{1*}\hat\mu_{\bf p}$ and that $\mu_{\bf p}$ is a Bernoulli measure, it follows that 
 $$
 \begin{aligned}
 \chi_s(\hat\mu_{\bf p}) =& \int_{\Sigma_I^+\times \Lambda} \kappa(\omega, x) d\hat\mu_{\bf p} = \int_{\Sigma_I^+\times \Lambda}  \log|\phi'_{\omega_1}|(\pi\sigma\omega) d\hat\mu_{\bf p} \\ &= \int_{\Sigma_I^+} \log|\phi'_{\omega_1}|(\pi\sigma\omega) d\mu_{\bf p} = \chi(\mu_{\bf p})\end{aligned}$$ Thus from Theorem \ref{thm1} we obtain the dimension formula. 

$\hfill\square$


\textit{Proof of Corollary \ref{qint}.}

First  recall the definition of $\beta_n(x)$ from (\ref{beta}) as being the number of multi-indices $(i_1, \ldots, i_n)\in \{1, \ldots, m\}^n$ so that $x \in \phi_{i_1}\circ\ldots \circ\phi_{i_n}(\Lambda)$. Recall that $U_j$ is the union of cylinders from the set $G_j$ in (\ref{gj}), for $1 \le j \le s$.
If  $x = \pi(\omega)$ and the maps $\phi_{j_1\ldots j_q}$  are grouped as in the statement, and if $\sigma^{q\ell}\omega \in U_{k_\ell}$ for $1 \le \ell \le n$, then using the notation in (\ref{mj}) we have the following bound on $\beta_{qn}(x)$,
\begin{equation}\label{betanx}
\beta_{qn}(x) \le m_{k_1}\ldots m_{k_n}
\end{equation}

Recall now that $\theta:\Sigma_m^+ \to \mathbb R$, $\theta(\omega) = \log m_j$, when $\omega \in U_j$ for some $1 \le i \le s$. From the definition, one sees that $\theta$ is a H\"older continuous potential on $\Sigma_m^+$. Also from (\ref{betanx}) we have,  $$\log \beta_{qn}(\pi\omega) \le S_{n, q}(\theta)(\omega),$$ where $S_{n, q}(\theta)(\omega)$ is the consecutive sum of $\theta$ with respect to  $\sigma^q$. Without loss of generality, assume $q =1$. 
From the definition of $b_n((\omega, x), \tau, \hat\mu_\psi)$, it follows that for any $n \ge 1, \tau>0, (\omega, x) \in \Sigma_m^+\times \Lambda$,  $$b_n((\omega, x), \tau, \hat\mu_\psi) \le  \beta_n(\phi_{\omega_n\ldots\omega_1}(x)) = \beta_n\circ \pi_2  \circ \Phi^n(\omega, x) $$
Hence,
\begin{equation}\label{intineq}
\begin{aligned}
\int_{\Sigma_m^+\times \Lambda} \log b_n((\omega, x), \tau, \hat\mu_\psi) \ d\hat\mu_\psi (\omega, x) \le & \int_{\Sigma_m^+\times \Lambda} \log \beta_n\circ \pi_2 \circ \Phi^n(\omega, x) d\hat\mu_\psi(\omega, x) \\ = & \int_{\Sigma_m^+\times \Lambda} \log \beta_n\circ \pi_2(\omega, x) d\hat\mu_\psi.
\end{aligned}
\end{equation}
Now take the lift of $\Phi$, namely $$\tilde \Phi: \Sigma_m^+\times \Sigma_m^+ \to \Sigma_m^+\times \Sigma_m^+, \ \tilde \Phi(\omega, \eta) = (\sigma \omega, \omega_1\eta)$$ 
If $\tilde \mu_\psi$ is the equilibrium state of $\psi\circ \pi_1:\Sigma_m^+\times \Sigma_m^+ \to \mathbb R$ for $\tilde \Phi$, then $\pi_{1*}\tilde \mu_\psi = \hat\mu_\psi$. 
Let $\tilde \pi_2:\Sigma_m^+\times \Sigma_m^+\to \Sigma_m^+$ be the  projection to  second coordinate. Then for any $(\omega, \eta) \in \Sigma_m^+\times \Sigma_m^+$, one has $\beta_n\circ \pi(\eta) = \beta_n\circ \pi_2(\omega, \pi(\eta))$. Thus from above and continuing (\ref{intineq}),  with $q$ assumed to be 1,
\begin{equation}\label{intineq2}
 \int_{\Sigma_m^+\times \Lambda} \log \beta_n\circ \pi_2(\omega, x) \ d\hat\mu_\psi(\omega, x) = \int_{\Sigma_m^+\times \Sigma_m^+} \log \beta_n\circ \pi(\eta)  d\tilde \mu_\psi(\omega, \eta) \le \int_{\Sigma_m^+}S_n\theta(\eta) \ d\tilde \pi_{2, *}\tilde \mu_\psi(\eta)
 \end{equation}
 
Now let us see more closely what is the measure $\tilde \pi_{2, *}\tilde \mu_\psi$. For a cylinder $[i_1\ldots i_n] \subset \Sigma_m^+$, we know that $\tilde \Phi^n([i_1\ldots i_n]\times \Sigma_m^+) = \Sigma_m^+\times [i_n\ldots i_1]$, so by the $\tilde \Phi$-invariance of $\tilde \mu_\psi$ and since $\pi_{1, *}\tilde \mu_\psi = \mu_\psi$, it follows that $\tilde\pi_{2, *}\tilde \mu_\psi([i_n\ldots i_1]) = \tilde\mu_\psi([i_1\ldots i_n]\times \Sigma_m^+) =   \mu_\psi([i_1\ldots i_n])$. 
 Now as $\theta$ is constant on 1-cylinders (as $q=1$), it implies that when we compute the integral over $\Sigma_m^+$ (thus considering all the cylinders), we obtain:
$$\int_{\Sigma_m^+}S_n\theta(\omega) d\tilde \pi_{2, *}\tilde \mu_\psi (\omega)  = \int_{\Sigma_m^+} S_n\theta(\omega) d\mu_\psi(\omega) = n\int_{\Sigma_m^+}\theta(\omega)d\mu_\psi(\omega)$$
Therefore, from (\ref{intineq}), (\ref{intineq2}) and the definition of the overlap number $o(\mathcal S, \hat\mu_\psi)$, we infer that:
$$\log o(\mathcal S, \hat\mu_\psi) \le  \int_{\Sigma_m^+} \theta(\omega)  d\mu_\psi (\omega).$$
In the general case, for arbitrary $q\ge 1$, we obtain similarly,
$$
\begin{aligned}
&\mathop{\lim}\limits_{n \to \infty}\frac {1}{qn} \int_{\Sigma_m^+\times \Lambda} \log \beta_{qn}(\pi\omega) d\hat\mu_\psi(\omega, y) \le \mathop{\lim}\limits_{n \to \infty} \frac {1}{qn} \int_{\Sigma_m^+} S_{n, q}(\theta) (\omega)d\mu_\psi(\omega) = \frac {1}{q}\int_{\Sigma_m^+} \theta(\omega) d\mu_\psi (\omega).
\end{aligned}
$$
Therefore it follows that,   $$\log o(\mathcal S, \hat\mu_\psi) \le \exp(\mathop{\lim}\limits_{n \to \infty}\frac {1}{n} \int_{\Sigma_m^+\times \Lambda} \log \beta_n(\pi\omega) d\hat\mu_\psi(\omega, y)) \le \frac {1}{q}\int_{\Sigma_m^+} \theta(\omega) d\mu_\psi (\omega).$$
Hence from Theorem \ref{thm1} and the last displayed inequality we obtain,
$$HD(\pi_{2*}\hat\mu_\psi) \ge \frac{h(\mu_\psi) - \frac {1}{q}\int_{\Sigma_m^+} \theta(\omega) \ d\mu_\psi (\omega)}{|\chi_s(\hat\mu_\psi)|}. $$ $\hfill\square$

\textbf{Acknowledgements:} This work was supported by  grant PN-III-P4-ID-PCE-2020-2693 
from Ministry of Research and Innovation, CNCS/CCCDI - UEFISCDI Romania. The author also thanks Yakov Pesin for discussions
during a visit at Penn State University.

 Eugen Mihailescu, 
Institute of Mathematics of the Romanian Academy, 
C. Grivitei 21,  
Bucharest, Romania. \ \ \ \ \ \ 

Email:   Eugen.Mihailescu\@@imar.ro
\ \ \ \ \ \
Web:  www.imar.ro/$\sim$mihailes


\begin{thebibliography}{}

\bibitem{BK}
B. Barany, A. K\"aenm\"aki, Ledrappier-Young formula and exact dimensionality of self-affine measures, Advances Math 318 (2017), 88-129.

\bibitem{BF}
J. Barral, D.J. Feng, On multifractal formalism for self-similar measures with overlaps, Math. Z. (2020). https://doi.org/10.1007/s00209-020-02622-5

\bibitem{BPS} L. Barreira, Y. Pesin, J. Schmeling, Dimension and product structure of hyperbolic measures, Ann. Math. 149 (1999), 755-783.

\bibitem{Bo}
R. Bowen, Equilibrium States and the Ergodic Theory of Anosov Diffeomorphisms, Lecture Notes in Math, 470, Springer 1975.

\bibitem{DN}
Q.R. Deng,  S.M. Ngai, Conformal iterated function systems with overlaps, Dyn Syst 26,  2011, 103-123.



\bibitem{ER}
J.P. Eckmann, D. Ruelle, Ergodic theory of chaos and strange attractors, Rev. Modern Phys. 57 (1985) 617-656.

\bibitem{F}
K.  Falconer, Techniques in Fractal Geometry, J. Wiley \& Sons, Chichester, 1997.

\bibitem{F1}
K. Falconer, Fractal geometry. Mathematical foundations and applications. Second edition. John Wiley \& Sons,  Hoboken, NJ, 2003.

\bibitem{FJ}
K.  Falconer, X. Jin, Exact dimensionality and projections of random
self-similar measures and sets, J London Math Soc (2), 90(2), 388–412, 2014.


\bibitem{FLR}
A.H. Fan, K.S. Lau and H. Rao, Relationships between different dimensions of a
measure, Monatsh Math. 135(3) (2002) 191–201.

\bibitem{Fe}
D.J. Feng, Gibbs properties of self-conformal measures and the multifractal formalism, Erg Th Dyn Syst, 27,  2007, 787 - 812.

\bibitem{FH}
D.J. Feng, H. Hu, Dimension theory of iterated function systems, Commun Pure Applied Math, 62, 1435-1500, 2009.



\bibitem{Ho}
M. Hochman, On self-similar sets with overlaps and inverse theorems for entropy. Ann
Math, 180, 773–822, 2014.

\bibitem{Hu}
J. E. Hutchinson,  Fractals and self-similarity,  Indiana Univ Math J 30 (1981), 713-747.

\bibitem{KH}
A. Katok, B.\ Hasselblatt, Introduction to the Modern Theory of Dynamical Systems, Cambridge Univ. Press,
London-New York, 1995.

\bibitem{Ke}
R. Kenyon, Projecting the one-dimensional Sierpinski gasket, Israel J. Math. 97
(1997) 221–238.

\bibitem{LNW}
K.S. Lau, S.M. Ngai, X.Y. Wang, Separation conditions for conformal iterated function systems, Monatsh Math (2009) 156:325–355.

\bibitem{LY}
F. Ledrappier, L.-S. Young,  The metric entropy of diffeomorphisms. I. Characterization of
measures satisfying Pesin’s entropy formula. II. Relations between entropy, exponents and dimension,
Ann Math  122 (1985), no. 3, 509-539; 540-574.




\bibitem{Mane}
R. Ma\~ne, The Hausdorff dimension of invariant probabilities of rational maps. Dynamical
Systems, Valparaiso 1986, Lect. Notes in Math. 1331, Springer-Verlag (1988),
86-117.

\bibitem{Ma}
A. Manning, A relation between exponents, Hausdorff dimension and entropy, Erg Th Dyn Syst 1 (1981), 451-459.

\bibitem{Ma1}
A. Manning, 
The dimension of the maximal measure for a polynomial map, Ann
Math 119 (1984), 425-430.

\bibitem{M-ETDS11}
E. Mihailescu, On a class of stable conditional measures, Erg Th Dyn Syst, 31, 1499-15, 2011.

\bibitem{M-MZ}
E. Mihailescu, Unstable directions and fractal dimension for skew products with overlaps in fibers, Math Zeit, 269, 2011, 733-750.

\bibitem{MS}
E. Mihailescu, B. Stratmann, Upper estimates for stable dimensions on fractal sets with variable numbers of foldings, Int Math Res Notices, 23, (2014),  6474-6496.

\bibitem{MU-BLMS}
E. Mihailescu, M. Urba\'nski, Relations between stable dimension and the preimage counting function on basic sets with overlaps, Bull. London Math Soc, 42 (2010) 15-27.

\bibitem{MU-JSP2016}
E. Mihailescu, M. Urba\'nski, Overlap functions for measures in conformal iterated function systems, J Stat Phys, 162, 2016, 43-62.

\bibitem{MU-Adv}
E. Mihailescu, M. Urba\'nski,
Random countable iterated function systems with overlaps and applications, Advances Math 298 (2016), 726-758.



\bibitem{O}
T. Orponen, On the distance sets of self-similar sets, Nonlinearity, 25(6):1919-1929,
2012.

\bibitem{Pa}
W. Parry, Entropy and Generators in Ergodic Theory, W.A.Benjamin, New York, 1969.

\bibitem{PeS}
Y. Peres, B. Solomyak,  Existence of $L^q$-dimensions and entropy dimension for self-conformal
measures, Indiana Univ Math J 49 (2000),  1603-1621.

\bibitem{Pe}
Y. Pesin, Dimension Theory in Dynamical Systems, Univ of Chicago Press, 1997.

\bibitem{PW} Y. Pesin, H. Weiss, On the dimension of deterministic and random Cantor-like sets, symbolic dynamics, and the Eckmann-Ruelle Conjecture, Commun Math Phys 182 (1996) 105-153.

\bibitem{PS}
M. Pollicott, K. Simon, The Hausdorff dimension of $\lambda$-expansions with deleted digits, Trans Amer Math Soc,  347, 967-983, 1995.


\bibitem{Ro} V. A. Rokhlin, Lectures on the theory of entropy of transformations with invariant measures, Russian Math Surveys, 22, 1967, 1-54.

\bibitem{Ru-78}
D. Ruelle, Thermodynamic Formalism, Addison-Wesley, Reading, 1978.

\bibitem{Ru-fold}
D. Ruelle, Positivity of entropy production in nonequilibrium statistical mechanics, J. Stat Phys, 85, 1/2, 1996, 1--23.

\bibitem{Ru-survey}
D. Ruelle, Smooth dynamics and new theoretical ideas in nonequilibrium statistical mechanics, J Stat Phys, 95, 1999, 393--468.

\bibitem{ST}
J. Schmeling, S. Troubetzkoy, Dimension and invertibility of hyperbolic endomorphisms with
singularities, Erg Th Dyn Syst 18 (1998), 1257–1282.

\bibitem{Sh}
P. Shmerkin, Projections of self-similar and related fractals: a survey of recent developments, Fractal geometry and stochastics V, 53--74, Progr. Probab., 70, Birkhauser/Springer,  2015. 



\bibitem{ShSo}
P. Shmerkin, B.  Solomyak,  Absolute continuity of self-similar measures, their projections and convolutions, Trans Amer Math Soc 368 (2016),  5125-5151.


\bibitem{Y}
L.S. Young, Dimension, entropy and Lyapunov exponents, Erg Th Dyn Syst 2, 109-124, 1982.
\end{thebibliography}
\end{document}